\documentclass[11pt]{amsart}
\usepackage[utf8x]{inputenc}
\usepackage{amsfonts}
\usepackage{amssymb,epsfig}
\usepackage{amsmath}
\usepackage{delarray}
\usepackage{graphicx}
\usepackage{color}
\vfuzz2pt 
\hfuzz2pt 
\newtheorem{thm}{Theorem}[section]

\newtheorem{cor}[thm]{Corollary}

\newtheorem{lemma}[thm]{Lemma}

\newtheorem{prop}[thm]{Proposition}

\newtheorem{defn}[thm]{Definition}
\theoremstyle{remark}

\numberwithin{equation}{section}
\newtheorem{example}[thm]{Example}

\newcommand{\fg}{\mathfrak g}

\newcommand{\cF}{\mathcal F}

\newcommand{\cL}{\mathcal L}

\newcommand{\cX}{\mathcal X}

\newcommand{\bbT}{\mathbb T}

\newcommand{\bbS}{\mathbb S}
\newcommand{\bbZ}{\mathbb Z}

\newcommand{\CIT}{\mathrm{CIT}}

\begin{document}

\title{Commuting  foliations}

\author{Nguyen Tien Zung,\;  Truong Hong Minh}
\address{Institut de Mathématiques de Toulouse, UMR5219, Université Toulouse 3}
\email{tienzung.nguyen@math.univ-toulouse.fr}
\email{hong-minh.truong@math.univ-toulouse.fr}

\begin{abstract}{%
The aim of this paper is to extend the notion of commutativity of vector fields to the category of singular 
foliations, using Nambu structures, i.e. integrable multi-vector fields. We will classify the relationship between singular foliations and Nambu structures, and show  some basic results about
commuting Nambu structures.
}\end{abstract}

\date{Version 2, October  2013}
\subjclass{53C12, 37C85, 32S65}
\keywords{commuting foliations,  integrable differential forms, Nambu structures}%

\maketitle

\section{Introduction}

Foliations are often viewed as generalizations of dynamical systems. A lot
of natural notions can be extended from  the world of dynamical systems to the world of foliations.
For example, many authors studied the entropy of foliations  (see, e.g.,  \cite{Walczak-Foliations}
and references therein, and also \cite{Zung-Entropy2011}). However, to our knowledge, 
the general notions of commutativity and integrability, which are  very important for dynamical systems, have 
been given very little attention so far in the world of foliations. The only explicit mentions of the words
``(non)commuting foliations'' that we found in the literature are a paper of
Movshev \cite{Movshev-YM2009} and some papers of Katok and his collaborators  
(see, e.g., \cite{EK-NonCommuting2005}). The aim of this paper is to attract attention to commuting
foliations, and to give some basic results about them, which are relatively simple but nevertheless
interesting in our view. We will also clarify the (somewhat confusing until now) relationship between singular foliations and Nambu structures. 

So what are commuting foliations? 
As V.I. Arnold said, a good definition is five good examples. 
Instead of giving a formal definition, let us list here some examples which could be considered
as (singular) commuting foliations. These examples are different but related to each other, 
and they all generalize the notion of commuting vector fields:

- Commuting actions of Lie groups and Lie algebras. In particular, foliations generated by commuting
vector fields in an integrable dynamical system. 

- A pair of transverse foliations $\cF_1,\cF_2$ such that the holonomy of $\cF_1$ acts trivially
on $\cF_2$ and vice versa (Movshev \cite{Movshev-YM2009}).

- Almost direct products, i.e. constructions of the type $(F_1 \times F_2)/G,$ where $F_1$ and $F_2$ are two
manifolds, and $G$ is a discrete group which acts on the product $F_1 \times F_2$  freely and diagonally,
i.e. $G$ acts on both $F_1$ and $F_2$, and the action of $G$ on 
$F_1 \times F_2$ is composed of these two actions of $G$ on $F_1$ and $F_2$. 
The two foliations on $(F_1 \times F_2) / G$
are the ``horizontal'' foliation with leaves $(F_1 \times \{pt\})/G$ and the ``vertical'' foliation with leaves $(\{pt\} \times F_2)/G$
respectively.

- Foliations generated by compatible Poisson structures, compatible symplectic structures, compatible Dirac
structures, etc.

- Parallelizable webs (see, e.g., \cite{AG-Webs2000}).

- Commuting Nambu structures (see below).

There are many different ways to generate (singular) foliations. One of the most general and convenient 
ways is via the so called Nambu structures,  i.e. multi-vector fields which are integrable à la Frobenius.
In this paper we will also be mainly concerned with Nambu structures.
We refer the reader to Chapter 6 of \cite{DufourZung-PoissonBook} and also to \cite{Zung-Nambu2012} 
for basic notions about Nambu structures which will be used in this paper. 
We will use the following definition of Nambu structures \cite{Zung-Nambu2012} 
which is a bit different from the original definition of Takhtajan  \cite{Takhtajan-Nambu1994}:
A {\bf Nambu structure} (or tensor) of order $q$ on a manifold $M$ is a $q$-vector field $\Lambda$ on $M$ which satisfies
the following  condition: for any point $p \in M$ such that $\Lambda(p) \neq 0$, there is
a local coordinate system $(x_1,\hdots,x_n)$ in a neighborhood of $p$ such that 
\begin{equation}
 \Lambda= \frac{\partial}{\partial x_1} \wedge \hdots \wedge \frac{\partial}{\partial x_q}
\end{equation}
in that neighborhood. Geometrically, a Nambu structure of order $q$ is nothing but a  singular $q$-dimensional foliation together with a contravariant volume form
(i.e. $q$-vector field) on its leaves: in the local normal form 
$\Lambda=\frac{\partial}{\partial x_1} \wedge \hdots \wedge \frac{\partial}{\partial x_q}$ the foliation is generated
by the commuting vector fields $\frac{\partial}{\partial x_1}, \hdots , \frac{\partial}{\partial x_q}.$ A Nambu structure is called \emph{regular}
if it does not vanish anywhere. In that case its foliation is also a regular foliation.

We will clarify the relationship between Nambu structures and foliations in Section 2, before studying commuting foliation (in term of commuting Nambu structures) in the rest of the paper.   

An advantage of multi-vector fields in the study of foliations is that we can use the Schouten bracket in the calculus of multi-vector fields. 
In particular, it would be natural to say that when two Nambu stuctures commute
then their Schouten bracket vanish. This definition, which was already mentioned in \cite{DufourZung-PoissonBook}, works well
in the case when the sum of the orders of the two Nambu structures does not exceed the dimension of the manifold 
(see Definition \ref{defn:CommutingNambu1} and Proposition \ref{prop:CommutingNambu1} below). However, if $\Lambda_1$ and
$\Lambda_2$ are two Nambu structures of orders $q_1$ and $q_2$ respectively on a $n$-dimensional manifold, such that
$q_1 + q_2 > n,$ then the condition $[\Lambda_1,\Lambda_2] = 0$ does not mean much. (It means nothing at all when 
$q_1 + q_2 \geq n + 2,$ because the order of $[\Lambda_1,\Lambda_2]$ is $q_1 + q_2 - 1 > n$ in that case so $[\Lambda_1,\Lambda_2]$
is automatically zero). In this paper, we will give a meaningful definition of commutativity of two Nambu structures $\Lambda_1$
and $\Lambda_2$ in the case  when $q_1 + q_2 > n,$ by replacing  the equation $[\Lambda_1,\Lambda_2] = 0$ by an appropriate  
stronger condition (see Definition \ref{defn:CommutingNambu2} and Proposition \ref{prop:CommutingNambu4}).

\section{From foliations to Nambu structures and back}

In this section, we will show how to go from a singular foliation to a Nambu structure and back, via the notions of associated Nambu stucture and associated foliation, and what changes and what remains when one goes around a full circle. We want to convince the reader that  Nambu structures are the right objects to consider in order to study singular foliations by algebraic means.

\subsection{Tangent and associated Nambu structures}

Given a singular foliation $\mathcal{F}$ of dimension $q$ on a manifold $M$ and a Nambu structure $\Lambda$ of order $q$ on $M$,
it would be natural to say that  $\Lambda$ is \emph{tangent} to $\mathcal{F}$ if for any point $x$ of $M$ such that $\Lambda(x)\neq 0$ we have
that $x$ is a regular point of $\mathcal{F}$ and locally near $x$ we can write
$$\Lambda=\frac{\partial}{\partial x_1}\wedge\hdots \wedge \frac{\partial}{\partial x_q},$$
and $\frac{\partial}{\partial x_1},\hdots,\frac{\partial}{\partial x_q}$ generate $\mathcal{F}$ near $x$.

If $\Lambda$ is tangent to $\mathcal{F}$ in the above sense, then evidently $S(\mathcal{F})\subset S(\Lambda)$, where $S(\mathcal{F})$, $S(\Lambda)$ are the singular set of $\mathcal{F}$ and $\Lambda$ respectively. A simple way to generate a local non-trivial Nambu structure tangent to a singular foliation $\mathcal{F}$ is to take $q$ local vector fields $X_1,\hdots, X_q$ which are tangent to $\mathcal{F}$ and which are linearly independent almost everywhere, and put
\begin{equation*}
\Pi=X_1\wedge\hdots\wedge X_q,
\end{equation*}
then clearly $\Pi$ is a Nambu structure tangent to $\mathcal{F}$. However, this Nambu structure $\Pi$ can be ``bad'', in the sense that it has too many singular points compared to the singularities of $\mathcal{F}$. Ideally, we would like to have a local Nambu structure which vanishes only at singular points of $\mathcal{F}$.
However, such a Nambu structure may or may not exist,  as the following example shows:

\begin{example}\label{ex:1} Consider a germ of singular foliation $\mathcal{F}$ of dimension $2$ at the origin of $\mathbb{C}^3$ whose leaves are $\{z=c\}$ with $c\neq 0$, $\{z=0,y\neq0\}$ and $\{z=0, y=0\}$. Then  $S(\mathcal{F})=\{z=0, y=0\}$. If $\Lambda$ is a Nambu structure tangent to $\mathcal{F}$ in the above sense, then it must have the form
$$\Lambda=f\cdot\frac{\partial}{\partial x}\wedge \frac{\partial}{\partial y},$$
where $f$ is a function vanishing on $S(\mathcal{F})$. Since $\mathrm{codim}S(\mathcal{F})=2$, there is no function which vanishes only on $S(\mathcal{F})$. Moreover, the singular set $S(\Lambda)=\{f=0\}$ has codimension $1$ which is too big compared to $S(\mathcal{F})$.     
\end{example}

This leads us to the following definition:

\begin{defn}\label{def:assoNambu}
A (local or global) Nambu structure $\Lambda$ of order $q$ is called {\bf tangent} to a singular foliation $\cF$ of dimension $q$ on the manifold $M$ if it satisfies the following condition: 
\begin{itemize}
\item[1.] Locally near every point $x\notin S(\Lambda)\cup S(\mathcal{F})$ of $M$ we can write
$$\Lambda=\frac{\partial}{\partial x_1}\wedge\hdots \wedge \frac{\partial}{\partial x_q},$$
and $\frac{\partial}{\partial x_1},\hdots,\frac{\partial}{\partial x_q}$ generate $\mathcal{F}$ near $x$;
\item[2.] $\mathrm{codim}\big(S(\cF)\setminus S(\Lambda)\big)\geq 2$.
\end{itemize}
We will say that $\Lambda$ is an {\bf associated Nambu structure}   if it is a tangent Nambu structure which satisfies the following additional conditions:
\begin{itemize}
\item[3.] $\mathrm{codim}\big(S(\Lambda)\setminus S(\cF)\big)\geq 2$. 
\item[4.] $S(\Lambda)$ is without multiplicity  in the sense that $\Lambda$ can't be written as 
$$\Lambda=f^2\Lambda',$$
where $f$ is a function which vanishes somewhere ($f(O)=0$ in the local case).
\end{itemize}
\end{defn}
A direct consequence of Conditions 2 and 3 above is  that if $\Lambda$ is an associated Nambu structure of a holomorphic foliation $\cF$ and $\mathrm{codim}S(\Lambda)=1$ or $\mathrm{codim}S(\mathcal{F})=1$ then we must have $S(\Lambda)=S(\mathcal{F})$. In Example \ref{ex:1}, an associated Nambu structure of $\mathcal{F}$ is 
$$\frac{\partial}{\partial x}\wedge \frac{\partial}{\partial y}.$$
The following proposition shows that, at least in the holomorphic category, an associated Nambu structure can be seen as a generator of the module  of tangent Nambu structures. Associated Nambu structures do not necessarily exist globally, but they exist locally, and the sheaf of local tangent holomorphic Nambu structures forms a holomorphic line bundle over the manifold, which is often called in the literature the \emph{anti-canonical line bundle} of the foliation. 

\begin{prop}
If $\mathcal{F}$ is holomorphic (i.e. generated by a family of local holomorphic vector fields) then locally near each point $x\in M$, there is a local associated Nambu structure $\Lambda$ which is unique up to multiplication by a function which does not vanish at $x$. Moreover, if $\Pi$ is another local tangent Nambu structure then
\begin{equation}
\Pi=f\Lambda,
\end{equation}
for some local holomorphic function $f$.
\end{prop}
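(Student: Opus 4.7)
The plan is to use the unique factorization property of the ring $\cO_{M,x}$ of germs of holomorphic functions at $x$, which lets one reduce a crudely constructed tangent Nambu structure to an essentially unique canonical one. Concretely, I would start from local holomorphic vector fields $X_1,\hdots,X_k$ generating the tangent sheaf of $\cF$ near $x$. Since $\cF$ has generic rank $q$ and that rank drops on $S(\cF)$, some wedge $\Pi = X_{i_1}\wedge\hdots\wedge X_{i_q}$ is non-zero on a dense open set and satisfies $S(\cF) \subset S(\Pi)$. Expanding $\Pi = \sum_{|I|=q} a_I\,\partial/\partial x_I$ in local coordinates and factoring $\gcd(a_I) = p_1^{e_1}\hdots p_r^{e_r}$ into irreducibles in $\cO_{M,x}$, each $p_j$ cuts out a codimension-one hypersurface $V_j$. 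I then define $\Lambda := \Pi/h$ where $h$ is the product of $p_j^{e_j}$ for those $j$ with $V_j \not\subset S(\cF)$ together with $p_j^{e_j-1}$ for those with $V_j\subset S(\cF)$, so that the coefficients $c_I = a_I/h$ of $\Lambda$ have a squarefree gcd supported inside $S(\cF)$.

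The verification that $\Lambda$ satisfies Definition~\ref{def:assoNambu} is then mechanical: Condition~1 holds because at a point where $\Lambda \neq 0$ and $\cF$ is regular, $\Lambda$ lies in the one-dimensional line $\wedge^q T\cF$ (by continuity from the open set where $h\neq 0$), so in adapted coordinates for $\cF$ it takes the form $\lambda\,\partial/\partial y_1 \wedge\hdots\wedge\partial/\partial y_q$ with $\lambda$ nonvanishing, and absorbing $\lambda$ into $y_1$ gives the required normal form. Condition~2 holds because every prime $p_j$ with $V_j \subset S(\cF)$ still divides all $c_I$, forcing $V_j \subset S(\Lambda)$. Condition~3 holds because the surviving gcd only involves primes whose zero loci lie inside $S(\cF)$, so no codimension-one piece of $S(\Lambda)$ escapes $S(\cF)$. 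Condition~4 follows from the squarefreeness of the surviving gcd.

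For the ``multiple'' statement, let $\Pi'$ be any local tangent Nambu of order $q$; at a generic point of $\cF$ both $\Pi'$ and $\Lambda$ lie in the one-dimensional line $\wedge^q T\cF$, so $\Pi' = f\Lambda$ for a unique meromorphic $f$. Writing $f = u/v$ as a reduced fraction of germs, the relations $v\, b_I = u\, c_I$ yield $v \mid \gcd(c_I)$, so $v$ is a squarefree product of certain $p_j$ with $V_j \subset S(\cF)$. But Condition~2 for $\Pi'$ forces $\Pi'$ to vanish on each such $V_j$, and since $\Lambda$ vanishes there only to first order $f$ has no pole along $V_j$; hence no $p_j$ divides $v$, so $v$ is a unit and $f$ is holomorphic. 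Uniqueness of $\Lambda$ up to a non-vanishing multiple is then automatic: applied to a pair of associated Nambus $\Lambda_1,\Lambda_2$, the resulting holomorphic scalars are mutually inverse, hence units.

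The main technical obstacle, beyond UFD bookkeeping, is showing that Nambu integrability survives the division by $h$ at points where $h$ vanishes but $\Lambda$ does not --- i.e.\ that $\Lambda$ admits the local normal form of~(1.1) even at singular points of $\cF$ at which $\Lambda$ happens to remain nonzero. The cleanest route is to note that Nambu integrability can be expressed by degree-homogeneous (Plücker-type) polynomial identities on the coefficients of a $q$-vector; since $\Pi = h\Lambda$ satisfies these identities and they are homogeneous, dividing out $h$ preserves them on the open dense set $\{h\neq 0\}$, and by analytic continuation they hold everywhere, guaranteeing that $\Lambda$ is genuinely decomposable at every nonzero point.
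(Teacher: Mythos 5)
Your construction is correct, and at bottom it is the same strategy as the paper's: start from a crude tangent wedge $X_{i_1}\wedge\hdots\wedge X_{i_q}$, strip off the codimension-one part of its zero divisor, keep exactly a reduced equation of the codimension-one part of $S(\cF)$, and prove uniqueness and the statement $\Pi=f\Lambda$ by a divisibility argument. The difference is in implementation: the paper divides successively by reduced functions and then invokes the Riemann extension theorem (extending $f$ across the codimension-$\geq 2$ set $S(\Lambda)$, plus a separate case where one factors out the reduced equation $s$ of $S(\cF)$), whereas you do all the bookkeeping at once in the UFD $\cO_{M,x}$ via the gcd of the coefficients $a_I$, and replace the extension theorem by an order-count along each prime ($v\mid\gcd(c_I)$, squarefreeness, and Condition~2 for the numerator force $v$ to be a unit). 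Your version is more uniform — both the $\mathrm{codim}\,S(\Lambda)\geq 2$ and the $\mathrm{codim}\,S(\Lambda)=1$ cases are handled by the same computation — and it makes Conditions~3 and~4 of Definition~\ref{def:assoNambu} visibly equivalent to ``$\gcd(c_I)$ is squarefree and supported on $S(\cF)$,'' which is a nice clarification; the paper's route buys a shorter write-up at the cost of a two-case analysis. One point to tighten: you rightly isolate the step the paper glosses over (that $\Pi/h$ is still a Nambu tensor at points where $h$ vanishes but the quotient does not), but your justification as stated is slightly off — Nambu integrability is not captured by polynomial identities in the coefficients alone, since decomposability (Plücker relations) must be supplemented by a differential integrability identity involving first derivatives of the $q$-vector. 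The fix is immediate: both families of identities are closed analytic conditions, bilinear in $(\Lambda,\partial\Lambda)$, they hold on the dense open set $\{h\neq 0\}$ because there $\Lambda$ is a nonvanishing multiple of a Nambu tensor, and hence they hold everywhere by analytic continuation, which yields the local normal form at every point where $\Lambda\neq 0$; with that rephrasing your argument (and incidentally the corresponding implicit step in the paper's proof) is complete.
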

\begin{proof}
Let us start with an arbitrary non-trivial holomorphic Nambu structure $\Pi$ tangent to $\mathcal{F}$, e.g. $\Pi=X_1\wedge\hdots\wedge X_q$ as above. There are two possibilities: either $\mathrm{codim}S(\Pi)=1$, or $\mathrm{codim}S(\Pi)\geq 2$; where $S(\Pi)$ is the singular set of $\Pi$.

If $\mathrm{codim}S(\Pi)=2$, it is clear that $\Pi$ is an associated Nambu structure.   

If $\mathrm{codim}S(\Pi)=1$, then it is the zero locus of some reduced function $g$, i.e. without multiplicity in its decomposition. Then $\frac{\Pi}{g}$ is holomorphic and so it is again a tangent Nambu structure. If $\mathrm{codim}S(\frac{\Pi}{g})=1$, then we can device $\frac{\Pi}{g}$ again by some other function which vanish at $x$ and so on. At the end, we obtain 
$$\Pi=h\Lambda,$$
where $\mathrm{codim}S(\Lambda)\geq 2.$  The problem which may arise now is that $\mathcal{F}$ itself may have a singular set of codimension $1$. If it is the case, we will find a reduced function $s$ such that the singular set of $\mathcal{F}$ is the zero locus of $s$. Then $s\Lambda$ will be an associated Nambu structure.

Let us now prove the uniqueness. Suppose that $\Lambda'$ is another associated Nambu structure, then it is clear that 
\begin{equation*}
\mathrm{codim}\Delta\big(S(\Lambda),S(\Lambda')\big)\geq 2, 
\end{equation*}
where $\Delta\big(S(\Lambda),S(\Lambda')\big)=\big(S(\Lambda)\setminus S(\Lambda')\big)\cup\big(S(\Lambda')\setminus S(\Lambda)\big)$, and
\begin{equation*}
\Lambda'=f\Lambda,
\end{equation*} 
for some holomorphic function $f$ which is defined outside the union of two singular sets $S(\Lambda)\cup S(\Lambda')$.

If $\mathrm{codim}S(\Lambda)\geq 2$, then $\mathrm{codim}S(\Lambda')\geq 2$. By Riemann extension theorem, $f$ can extend to the points of $S(\Lambda)\cup S(\Lambda')$. Switching the role of $\Lambda$ and $\Lambda'$, we can deduce that $f$ is invertible. 

If $\mathrm{codim}S(\Lambda)= 1$, then $S(\Lambda)=S(\Lambda')=\{s=0\}$ for some reduced function $s$. Condition $4$ in Definition \ref{def:assoNambu} implies that
\begin{equation*}
\Lambda=s\Lambda_1,\; \Lambda'=s\Lambda'_1,
\end{equation*}
where $\mathrm{codim}S(\Lambda_1)\geq 2$ and $\mathrm{codim}S(\Lambda'_1)\geq 2$. The proof of the uniqueness is done by considering $\Lambda_1$ and $\Lambda'_1$ instead of $\Lambda$ and $\Lambda'$.

Now, if $\Pi$ is a tangent Nambu structure of $\mathcal{F}$, then we can write
\begin{equation*}
\Pi=f\Lambda,
\end{equation*}
for some holomorphic function $f$ defined outside the singular set $S(\Lambda)$. The function $f$ can extend to the points of $S(\Lambda)$ in the case $\mathrm{codim}S(\Lambda)\geq 2$. If  $\mathrm{codim}S(\Lambda)= 1$, the $S(\Lambda)=S(\cF)$. We write
\begin{equation*}
\Lambda=s\Lambda_1,
\end{equation*}
where $s$ is a reduced function and $\mathrm{codim}S(\Lambda_1)\geq 2$. By Condition 2 in Definition \ref{def:assoNambu}, we have $S(\cF)\subset S(\Pi)$. Hence, $\Pi$ can be written as
\begin{equation*}
\Pi=s\Pi_1,
\end{equation*}
where $\Pi_1$ is a Nambu structure. We still have
\begin{equation*}
\Pi_1=f_1\Lambda_1.
\end{equation*}
But now $f_1$ is a holomorphic function defined outside the singular set of $\Lambda_1$ which satisfies $\mathrm{codim}S(\Lambda_1)\geq 2$. This finishes the proof by using the Riemann extension theorem for $f_1$.
\end{proof}


\subsection{Associated foliations}
We say that a vector field $X$ is tangent to a Nambu structure $\Lambda$ if 
\begin{equation*}
X\wedge\Lambda=0.
\end{equation*} 
The set of tangent vector fields forms an integrable distribution and hence defines a singular foliation. However, the foliation defined in this way may lose many singularities of $\Lambda$, as in the following example shows:
\begin{example}\label{ex:2.4}
Consider the following Nambu structure on $\mathbb{C}^2$:
\begin{equation*}
\Lambda=x\frac{\partial}{\partial x}\wedge\frac{\partial}{\partial y}.
\end{equation*}
Then $\mathrm{codim} S(\Lambda)=\mathrm{codim}\{x=0\}=1$, but the foliation $\mathcal{F}$ defined by the tangent vector fields of $\Lambda$ consists of just one leaf, which is the whole space $\mathbb{C}^2$.
\end{example}

This leads us to the following definition:

\begin{defn}
A vector field $X$ is called a {\bf conformally invariant tangent} (CIT) vector field of a Nambu structure $\Lambda$ if $X$ is tangent to $\Lambda$ and $X$ conformally preserves $\Lambda$, i.e. 
\begin{equation*}
\cL_X\Lambda=f\Lambda
\end{equation*} 
for some function $f$. 
\end{defn} 

The set of CIT vector fields of $\Lambda$ will be denoted by $\CIT(\Lambda)$. The following proposition shows that $\CIT(\Lambda)$ generates an integrable singular distribution. Therefore, it defines a foliation, which will be called the {\bf associated foliation} of $\Lambda$.

\begin{prop}
The set of CIT vector fields of a smooth (resp. analytic) Nambu structure $\Lambda$ of order $q$ generates a smooth (resp. analytic) integrable singular distribution.
\end{prop}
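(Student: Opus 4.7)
The plan is to verify that $\CIT(\Lambda)$ is both a module over $C^\infty(M)$ (respectively $\mathcal{O}_M$) and closed under the Lie bracket of vector fields; this makes the distribution it generates integrable in the Stefan--Sussmann sense, with the required regularity inherited from that of the individual CIT vector fields.

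For closure under Lie bracket, I take $X,Y \in \CIT(\Lambda)$ with $\cL_X \Lambda = f_X \Lambda$ and $\cL_Y \Lambda = f_Y \Lambda$. The Leibniz rule for the Schouten bracket, $[X, Y\wedge\Lambda] = [X,Y]\wedge\Lambda + Y\wedge [X,\Lambda]$, applied to the identity $Y\wedge\Lambda = 0$, gives
\[
0 = [X,Y]\wedge\Lambda + f_X(Y\wedge\Lambda) = [X,Y]\wedge\Lambda,
\]
so $[X,Y]$ is tangent to $\Lambda$. The identity $\cL_{[X,Y]} = [\cL_X, \cL_Y]$ then yields
\[
\cL_{[X,Y]}\Lambda = \cL_X(f_Y\Lambda) - \cL_Y(f_X\Lambda) = \bigl(X(f_Y) - Y(f_X)\bigr)\Lambda,
\]
establishing the CIT property for $[X,Y]$.

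For closure under multiplication by a function $g$, tangency is immediate from $(gX)\wedge\Lambda = g(X\wedge\Lambda) = 0$. For the conformal condition, I would invoke the Schouten identity $[gX, \Lambda] = g[X,\Lambda] - X\wedge \iota_{dg}\Lambda$ and show that $X\wedge \iota_{dg}\Lambda$ is always a multiple of $\Lambda$: at any regular point of $\Lambda$ the local normal form $\Lambda = \partial_{x_1}\wedge\cdots\wedge\partial_{x_q}$ combined with tangency forces $X = \sum_{i=1}^q a_i\partial_{x_i}$, and a short expansion of $\iota_{dg}\Lambda$ in this basis then yields $X\wedge\iota_{dg}\Lambda = X(g)\Lambda$. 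Since the regular set of $\Lambda$ is open and dense, this equality extends to all of $M$ by continuity, whence $\cL_{gX}\Lambda = (gf_X - X(g))\Lambda$ and $gX\in \CIT(\Lambda)$.

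The main obstacle in this plan is the pointwise computation $X\wedge\iota_{dg}\Lambda = X(g)\Lambda$ and, more subtly, its extension from the regular locus of $\Lambda$ to its singular points. In the analytic setting the density of the regular locus is automatic; in the smooth case one relies on the existence of the local normal form at every regular point to argue that the singular set has empty interior, so the global identity still follows by continuity.
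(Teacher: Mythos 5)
Your algebraic computations are fine (and in places cleaner than the paper's): the Leibniz-rule argument for $[X,Y]\wedge\Lambda=0$, the formula $\cL_{[X,Y]}\Lambda=(X(f_Y)-Y(f_X))\Lambda$, and the identity $\cL_{gX}\Lambda=(gf_X-X(g))\Lambda$ are all correct. (A small repair: in the smooth category the regular locus of $\Lambda$ need not be dense, since $\Lambda$ may vanish on an open set; but the identity $X\wedge\iota_{dg}\Lambda=X(g)\Lambda$ still holds everywhere because both sides vanish identically on the interior of $S(\Lambda)$, and every other point is a limit of regular points.)

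The genuine gap is the last step: in the smooth category, knowing that $\CIT(\Lambda)$ is a $C^\infty$-module closed under the Lie bracket does \emph{not} by itself give integrability of the generated singular distribution. Involutivity of a singular distribution spanned by a module of vector fields is insufficient without extra hypotheses: for instance, on $\bbR^2$ the module of all smooth vector fields $a\partial_x+b\partial_y$ with $b\equiv 0$ on $\{x\leq 0\}$ is closed under brackets and spans the distribution which is $2$-dimensional on $\{x>0\}$ and spanned by $\partial_x$ on $\{x\leq 0\}$, yet that distribution admits no foliation whose leaves have the distribution as tangent spaces. To apply Stefan--Sussmann one needs either local finite generation of the module (Hermann's theorem), which you have not established for $\CIT(\Lambda)$, or invariance of the distribution under the local flows of the generating vector fields, which is exactly the step the paper supplies: since each $X\in\CIT(\Lambda)$ conformally preserves $\Lambda$, its local flow preserves $\Lambda$ up to a conformal factor, hence pushes CIT vector fields to CIT vector fields and so preserves the distribution generated by $\CIT(\Lambda)$; Stefan--Sussmann then yields integrability. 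You should add this flow-invariance argument (it is the crux of the smooth case); in the analytic case your involutivity argument does suffice by the Hermann--Nagano-type result, so there the proposal is complete.
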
 
\begin{proof}
Let us prove in the smooth category (the analytic category is similar and simpler). First of all, we show that $\CIT(\Lambda)$ is a module over the ring of smooth functions. Assume that $X$ and $Y$ are two CIT vector fields. Then it is clear that $X+Y$ is also a CIT vector field. If $f$ is a smooth function, we need to show that $fX$ is again a CIT vector field of $\Lambda$.

Let $\Omega$ be a smooth volume form and $\omega=i_\Lambda\Omega$ be the dual $(n-q)$-form of $\Lambda$. We claim that a vector field $Z$ conformally preserves $\Lambda$ if and only if it conformally preserves $\omega$. Indeed, if $\cL_{Z}\Lambda=g\Lambda$ then
\begin{align*}
\cL_{Z}\omega&=\cL_{Z}\left(i_\Lambda\Omega\right)=i_{\cL_{Z}\Lambda}\Omega+i_{\Lambda}\cL_{Z}\Omega\\
&= gi_{\Lambda}\Omega+hi_\Lambda\Omega=(g+h)\Omega,
\end{align*}
for some smooth function $h$. Conversely, if $\cL_{Z}\omega=g'\omega$, we write
\begin{equation*}
\Lambda=\omega\lrcorner\Omega^{-1},
\end{equation*}
where $\Omega^{-1}$ is a $n$-vector satisfying $\Omega(\Omega^{-1})=1$. Then we have
\begin{align*}
\cL_{Z}\Lambda&=(\cL_{Z}\omega)\lrcorner\Omega^{-1}+\omega\lrcorner\cL_{Z}\Omega^{-1}\\
&=g'\omega\lrcorner\Omega^{-1}+h'\omega\lrcorner\Omega^{-1}=(g'+h')\Lambda.
\end{align*}
for some smooth function $h'$. 

Now if $X$ is a CIT vector field of $\Lambda$, then $i_X\omega=0$. Consequently, 
\begin{align*}
\cL_{fX}\omega&=i_{fX}d\omega+d(i_{fX}\omega)
=fi_{X}\omega\\ &=f\cL_{X}\omega.
\end{align*}
It implies that $fX$ is also a CIT vector field of $\Lambda$.

The second step is to prove that $\CIT(\Lambda)$ is involutive. Let $X$, $Y$ be two CIT vector fields of $\Lambda$. Locally near a regular point $x$ of $\Lambda$, there is a local system of coordinates $(x_1,\hdots,x_n)$ such that
\begin{equation*}
\Lambda=\frac{\partial}{\partial x_1}\wedge\hdots\wedge\frac{\partial}{\partial x_q}.
\end{equation*}
Since $X$, $Y$ are tangent to $\Lambda$, we can write
\begin{equation*}
X=\sum_{i=1}^k a_i\frac{\partial}{\partial x_i},\;Y=\sum_{i=1}^k b_i\frac{\partial}{\partial x_i}. 
\end{equation*}
It follows that locally near every regular point $x$ we have $[X,Y]\wedge\Lambda=0$. Therefore, $[X,Y]\wedge\Lambda=0$ everywhere. Moreover, if $\cL_X\Lambda=g_X\Lambda$ and $\cL_Y\Lambda=g_Y\Lambda$, then we have
\begin{align*}
\cL_{[X,Y]}\Lambda&=\cL_{X}\cL_{Y}\Lambda-\cL_{Y}\cL_{X}\Lambda\\
&=(X(g_Y)-Y(g_X))\Lambda.
\end{align*}
Hence, $[X,Y]$ is also a CIT vector field. 

Now, since every CIT vector field conformally preserves $\Lambda$, its local flow also preserves $\Lambda$ conformally. Consequently, it preserves everything conformally generated by $\Lambda$, including the singular distribution generated by $\CIT(\Lambda)$. Therefore, by Stefan-Sussmann theorem (see Chapter 1 of \cite{DufourZung-PoissonBook}), we obtain that the smooth singular distribution generated by $\CIT(\Lambda)$ is integrable.
\end{proof}

\begin{defn}
The foliation generated by the CIT vector fields of $\Lambda$ is called the {\bf associated foliation} of $\Lambda$, and denoted by $\cF_\Lambda$. 
\end{defn}

Clearly, if $f$ is an invertible function (i.e. $f \neq 0$ everwhere) then 
$\cF_\Lambda = \cF_{f\Lambda}$. 
In Example \ref{ex:2.4}, $\cF_\Lambda$ is generated by $\{x\frac{\partial}{\partial x},\frac{\partial}{\partial y}\}$ and consists two leaves $\{x=0\}$
and $\{x\neq 0\}$.

\begin{prop}\label{prop:NambuTangentToAssoFol}
For any Nambu structure $\Lambda$, the singular set of $\Lambda$ contains the singular set of its associated foliation:
 $$S(\mathcal{F}_\Lambda)\subset S(\Lambda).$$
\end{prop}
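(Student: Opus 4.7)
The plan is to show the contrapositive: if $x \notin S(\Lambda)$ then $x \notin S(\mathcal{F}_\Lambda)$, i.e.\ the associated foliation is regular at every regular point of $\Lambda$. The tool is simply the Nambu normal form, combined with the observation that tangency to $\Lambda$ forces CIT vector fields to lie in the span of the coordinate fields appearing in that normal form.

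First, at any point $x$ with $\Lambda(x)\neq 0$, choose local coordinates $(x_1,\ldots,x_n)$ such that
\[
\Lambda=\frac{\partial}{\partial x_1}\wedge\cdots\wedge\frac{\partial}{\partial x_q}.
\]
I would then verify that each coordinate field $\partial/\partial x_i$ for $i=1,\ldots,q$ lies in $\CIT(\Lambda)$: tangency $(\partial/\partial x_i)\wedge\Lambda=0$ is immediate, and $\mathcal{L}_{\partial/\partial x_i}\Lambda=0$ is even stronger than the conformal preservation required. Hence the distribution generated by $\CIT(\Lambda)$ is at least $q$-dimensional at $x$.

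Next I would show this distribution is at most $q$-dimensional near $x$. Any vector field $X$ tangent to $\Lambda$ (so in particular any CIT field) satisfies $X\wedge \Lambda=0$, which in the chosen coordinates forces
\[
X=\sum_{i=1}^{q} a_i(x)\,\frac{\partial}{\partial x_i}
\]
on the neighborhood. Thus, in a neighborhood of $x$, the distribution spanned by $\CIT(\Lambda)$ coincides exactly with the regular $q$-dimensional distribution $\mathrm{span}\{\partial/\partial x_1,\ldots,\partial/\partial x_q\}$, and its integral leaves are the level sets $\{x_{q+1}=c_{q+1},\ldots,x_n=c_n\}$. This is a regular foliation by $q$-dimensional plaques, so $x$ is a regular point of $\mathcal{F}_\Lambda$.

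There is no real obstacle here; the only thing to be mindful of is the two-sided nature of the argument—having $\partial/\partial x_i\in\CIT(\Lambda)$ gives the lower bound on the dimension of the distribution, while the tangency condition $X\wedge\Lambda=0$ provides the matching upper bound. Together they pin down the associated foliation near any regular point of $\Lambda$ and establish $S(\mathcal{F}_\Lambda)\subset S(\Lambda)$.
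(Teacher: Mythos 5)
Your proposal is correct and takes essentially the same route as the paper: at a point where $\Lambda \neq 0$ one exhibits $q$ linearly independent local CIT vector fields (you use the coordinate fields of the Nambu normal form, the paper uses the Hamiltonian contractions $(dx_1\wedge\hdots\wedge\widehat{dx_i}\wedge\hdots\wedge dx_q)\lrcorner\Lambda$ in an arbitrary chart), while the tangency condition $X\wedge\Lambda=0$ caps the rank of the distribution generated by $\CIT(\Lambda)$ at $q$ near that point, so $\cF_\Lambda$ is regular there. Your explicit treatment of the upper bound is a welcome detail that the paper leaves implicit, but the argument is the same.
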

\begin{proof}
Fix an arbitrary local coordinate system  $(x_1,\hdots,x_n)$. Let $P$ be a regular point  of $\Lambda$ in this coordinate system. Then we may assume, without lost of generality, that
$$a_{12\hdots q}(P)\neq 0,$$
where
$$\Lambda=\sum_{i_1<\cdots<i_q}a_{i_1\hdots i_q}\frac{\partial}{\partial x_{i_1}}\wedge\hdots\wedge \frac{\partial}{\partial x_{i_q}}.$$
Denote $X_i=(dx_1\wedge\hdots\wedge\widehat{dx_i}\wedge\hdots\wedge dx_q)\lrcorner\Lambda$ (the hat means that the corresponding term is missing in the expression). Then $X_i$ is local Hamiltonian vector fields of $\Lambda$, and obviously it is in $\CIT(\Lambda)$. The condition $a_{12\hdots q}(P)\neq 0$ implies that 
$$X_1\wedge\hdots\wedge X_q(P)\neq 0,$$
and $\cF_\Lambda$ is generated by $X_1,\hdots,X_q$ near $P$. So $P$ is also a regular point of $\cF_\Lambda$.
\end{proof}

\begin{prop}\label{prop:Tang&Asso}
Let $\Lambda$ be a holomorphic Nambu structure. If $\mathrm{codim}(\Lambda)\geq 2$, then every tangent vector of $\Lambda$ is also a conformally invariant vector field of $\Lambda$.  
\end{prop}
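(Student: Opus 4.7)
The plan is to verify the identity $\cL_X \Lambda = f\Lambda$ first on the regular part $M \setminus S(\Lambda)$ by a direct computation in Nambu normal form, and then to extend the scalar coefficient $f$ holomorphically across $S(\Lambda)$ via the Hartogs-type second Riemann extension theorem. The codimension hypothesis is used precisely in this extension step.

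Fix a point $p \in M \setminus S(\Lambda)$ and Nambu coordinates $(x_1,\ldots,x_n)$ in which $\Lambda = \frac{\partial}{\partial x_1} \wedge \cdots \wedge \frac{\partial}{\partial x_q}$. The condition $X \wedge \Lambda = 0$ forces $X = \sum_{i=1}^q a_i \frac{\partial}{\partial x_i}$ for some local holomorphic $a_i$. Using the derivation property of $\cL_X$ on multi-vectors together with $[X, \partial/\partial x_k] = -\sum_i (\partial a_i/\partial x_k)\,\partial/\partial x_i$, and noting that only the $i=k$ term survives the wedge with $\partial/\partial x_1 \wedge \cdots \wedge \widehat{\partial/\partial x_k} \wedge \cdots \wedge \partial/\partial x_q$, one obtains
$$\cL_X \Lambda = -\Big(\sum_{k=1}^q \frac{\partial a_k}{\partial x_k}\Big)\,\Lambda.$$
Thus on $M \setminus S(\Lambda)$ there is a well-defined holomorphic function $f$, given locally by the above divergence formula, satisfying $\cL_X \Lambda = f \Lambda$. (The global well-definedness is automatic: wherever $\Lambda \neq 0$, any multi-vector proportional to $\Lambda$ has a unique scalar ratio.)

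By hypothesis $S(\Lambda)$ is an analytic subset of complex codimension $\geq 2$, so the second Riemann extension theorem (Hartogs) produces a unique holomorphic extension $\tilde f$ of $f$ to all of $M$. The holomorphic $q$-vector fields $\cL_X \Lambda$ and $\tilde f \Lambda$ agree on the dense open set $M \setminus S(\Lambda)$ and hence agree everywhere, which gives $X \in \CIT(\Lambda)$.

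The only delicate point is the extension of $f$; this is where the codimension hypothesis is essential. Without it, $f$ may have poles along a divisorial component of $S(\Lambda)$ and thus fail to extend holomorphically. This is illustrated by Example \ref{ex:2.4}: for $\Lambda = x\frac{\partial}{\partial x} \wedge \frac{\partial}{\partial y}$ a direct computation gives $\cL_{\partial/\partial x}\Lambda = \frac{\partial}{\partial x} \wedge \frac{\partial}{\partial y} = \tfrac{1}{x}\Lambda$, so $\partial/\partial x$ is tangent to $\Lambda$ but not a CIT vector field, consistent with $\mathrm{codim}\, S(\Lambda) = 1$ there.
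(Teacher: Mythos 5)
Your proof is correct and takes essentially the same route as the paper's: in Nambu coordinates near a regular point one gets $\cL_X\Lambda = f\Lambda$ with $f$ holomorphic on $M\setminus S(\Lambda)$, and the hypothesis $\mathrm{codim}\,S(\Lambda)\geq 2$ lets the Riemann (Hartogs) extension theorem extend $f$ across the singular set. Your explicit divergence formula for $f$ and the check against Example \ref{ex:2.4} are nice additions, but the argument is the same.
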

\begin{proof}
Let $X$ be a tangent vector of $\Lambda$. Locally near a regular point of $\Lambda$, there is a local system of coordinate $(x_1,\hdots,x_n)$ such that
\begin{equation*}
\Lambda=\frac{\partial}{\partial x_1}\wedge\hdots\wedge\frac{\partial}{\partial x_1},\,X=\sum_{i=1}^k a_i\frac{\partial}{\partial x_i}.
\end{equation*}
It leads to 
\begin{equation*}
\cL_X\Lambda=g\Lambda,
\end{equation*}
where $g$ is a holomorphic functions defined outside the singular set of $\Lambda$. Since $\mathrm{codim}(\Lambda)\geq 2$, by Riemann extension theorem $g$ can extend to the points of $S(\Lambda)$ and it implies that $X$ is CIT vector field.
\end{proof}

\subsection{From foliations to Nambu structures and back}
Given a local Nambu structure $\Lambda$ and a local function $f$, we will say that $\Lambda$ \emph{conformally preserves $f$} if there exists a local Nambu structure $\Sigma$ of order $q-1$ such that
\begin{equation*}
[f,\Lambda]=f\Sigma,
\end{equation*} 
where the above bracket means the Schouten bracket.
The set of functions which are  conformally preserved by  $\Lambda$ will be denoted by $\mu(\Lambda)$.
\begin{lemma}\label{lem:210} If a function $f$ is irreducible and $f\nmid\Lambda$, then $f\in\mu(\Lambda)$ if and only if   
\begin{align*}
f|X(f),\forall X\in\CIT(\Lambda).
\end{align*}
\end{lemma}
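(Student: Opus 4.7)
The plan is to work directly with the interior-product presentation of the Schouten bracket, $[f,\Lambda] = -i_{df}\Lambda$, and combine it with two complementary tools: a universal identity valid for every vector field tangent to $\Lambda$, and a local family of Hamiltonian CIT vector fields whose values on $f$ literally recover the components of $[f,\Lambda]$. The identity is obtained by applying the graded Leibniz rule for the interior product with $df$ to the tangency relation $X\wedge\Lambda = 0$, which yields
\begin{equation*}
X\wedge [f,\Lambda] \;=\; -\,X(f)\,\Lambda
\end{equation*}
for every $X$ tangent to $\Lambda$, hence in particular for every $X \in \CIT(\Lambda)$.

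For the direction $(\Rightarrow)$, suppose $[f,\Lambda] = f\Sigma$. Substituting into the identity above gives $X(f)\,\Lambda = -f\,(X\wedge\Sigma)$ for each $X \in \CIT(\Lambda)$. In any local coordinate system, write $\Lambda = \sum_I a_I\,\partial_I$; the assumption $f\nmid\Lambda$ supplies a multi-index $I_0$ with $f\nmid a_{I_0}$, and reading the $\partial_{I_0}$-coefficient of both sides yields $X(f)\,a_{I_0} \in (f)$. Since $f$ is irreducible, hence prime in the local ring, this forces $f \mid X(f)$.

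For the converse $(\Leftarrow)$, I fix local coordinates $(x_1,\ldots,x_n)$ and, for each strictly increasing $(q-1)$-tuple $J=(j_1,\ldots,j_{q-1})$, introduce the Hamiltonian vector field $X_J := i_{dx_{j_1}\wedge\cdots\wedge dx_{j_{q-1}}}\Lambda$, which belongs to $\CIT(\Lambda)$ as already observed in the proof of Proposition~\ref{prop:NambuTangentToAssoFol}. A direct expansion of $-i_{df}\Lambda$ in the $\partial_J$-basis shows that the $\partial_J$-coefficient of $[f,\Lambda]$ is exactly $\pm X_J(f)$, so the hypothesis $f \mid X(f)$ for every $X \in \CIT(\Lambda)$ implies that $f$ divides every coefficient of $[f,\Lambda]$; consequently $\Sigma := [f,\Lambda]/f$ is a well-defined holomorphic $(q-1)$-vector field. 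To see that $\Sigma$ is actually a Nambu structure, I argue that on the open dense locus $\{\Lambda \neq 0,\, f \neq 0\}$ the tensor $i_{df}\Lambda$ is a $(q-1)$-vector tangent to the $q$-dimensional leaf of $\Lambda$, hence automatically Plücker-decomposable, with associated foliation given by the level sets of $f$ inside the leaves---so $\Sigma$ is Nambu there---and the algebraic Plücker and Frobenius-type integrability conditions defining the Nambu property are closed, so $\Sigma$ is Nambu on its whole domain.

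The step I expect to be most delicate is this final Nambu verification at points of $S(\Lambda)\cap\{f=0\}$, where the quotient $[f,\Lambda]/f$ is of the indeterminate form $0/0$; the closedness of the defining algebraic conditions together with density of the regular locus resolves it, but it is the only place where the argument goes beyond direct manipulation of the Schouten bracket and the Leibniz rule for $i_{df}$.
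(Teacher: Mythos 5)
Your proof is correct, but it takes a genuinely different route from the paper's for the nontrivial direction. The paper first records the same tangency identity ($Y(f)\Lambda-Y\wedge[f,\Lambda]=[f,Y\wedge\Lambda]=0$) and reduces the lemma to the statement that $f\mid Y(f)$ for \emph{every} vector field $Y$ tangent to $\Lambda$; the heart of its argument is then to show that any tangent $Y$ becomes a CIT vector field after multiplication by a suitable function $s$ with $\{s=0\}\subset S(\Lambda)$ (proved via the dual form $\omega=i_\Lambda\Omega$, exactly as in Proposition \ref{prop:Tang&Asso}), after which coprimality of $s$ and $f$ (from $f\nmid\Lambda$ and irreducibility) transfers the divisibility $f\mid sY(f)$ to $f\mid Y(f)$. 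You bypass the passage through general tangent fields entirely: you observe that the Hamiltonian fields $X_J=(dx_{j_1}\wedge\cdots\wedge dx_{j_{q-1}})\lrcorner\Lambda$ already lie in $\CIT(\Lambda)$ and that their values on $f$ are, up to a uniform sign, precisely the coefficients of $[f,\Lambda]=-i_{df}\Lambda$, so the CIT hypothesis immediately gives $f\mid[f,\Lambda]$ coefficientwise; this is shorter, and it also makes explicit the step (divisibility of all coefficients $\Rightarrow$ $f\in\mu(\Lambda)$) that the paper asserts without detail. In exchange, the paper's argument yields the stronger and independently useful fact that CIT-divisibility is equivalent to divisibility for all tangent vector fields, a mechanism ($sY\in\CIT(\Lambda)$) that reappears in Lemma \ref{lem:NambuFolNambu}. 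Your extra care about $\Sigma=[f,\Lambda]/f$ actually being a Nambu structure addresses a point the paper silently skips; your resolution is fine, and can even be streamlined by noting that $df\lrcorner\Lambda$ is a Hamiltonian Nambu structure (as stated in Section 3.2), so only the division by $f$ on $\{f=0\}$ needs the closedness-of-the-defining-identities argument you give.
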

\begin{proof}
If $Y$ is tangent to $\Lambda$, then 
\begin{equation*}
Y(f)\Lambda-Y\wedge[f,\Lambda]=[f,Y\wedge\Lambda]=0.
\end{equation*}
Therefore, $[f,\Lambda]=f\Sigma$ if and only if $f|Y(f)$ for all $Y$ tangent to $\Lambda$. Hence, it remains to show that if $f|X(f)$ for all $X\in\CIT(\Lambda)$ then   $f|Y(f)$ for all $Y$ tangent to $\Lambda$. 

Let $Y$ be a tangent vector of $\Lambda$. Using the same argument as in the proof of Proposition \ref{prop:Tang&Asso}, we have
\begin{equation*}
\cL_Y\Lambda=g\Lambda,
\end{equation*}
where $g$ is a holomorphic functions defined outside the singular set of $\Lambda$. The function $g$ is  meromorphic, so we can write 
\begin{equation*}
g=\frac{t}{s},
\end{equation*}
where $\{s=0\}\subset S(\Lambda)$. We claim that $sY\in\CIT(\Lambda)$. Indeed, let $\Omega$ be a  volume form and $\omega=i_{\Lambda}\Omega$, we have
\begin{align*}
\cL_{Y}\omega&=i_{\cL_Y\Lambda}\Omega+i_\Lambda\cL_Y\Omega=gi_\Lambda\Omega+hi_\Lambda\Omega\\
&=\frac{t+hs}{s}\omega,
\end{align*}
for some holomorphic function $h$. This leads to
\begin{align*}
\cL_{s Y}\Lambda&=(\cL_{s Y}\omega)\lrcorner\Omega^{-1} + \omega\lrcorner\cL_{sY}\Omega^{-1}\\
&=s\cL_Y\omega\lrcorner\Omega^{-1}+h'\omega\lrcorner\Omega^{-1}\\
&=(t+hs+h')\Lambda,
\end{align*}
for some holomorphic function $h'$. Therefore, $sY\in\CIT(\Lambda)$. It implies that
$$f|sY(f).$$
Since $f\nmid\Lambda$ and $\{s=0\}\subset S(\Lambda)$, we have that $f$ and $s$ are coprime. Consequently, $f|Y(f)$.
\end{proof}

The following proposition shows how does a Nambu structure change after the cycle: $\Lambda\rightarrow\cF_\Lambda\rightarrow\Lambda'$. 
\begin{prop}\label{prop:NambuFolNambu}
Let $\Lambda$ be a holomorphic Nambu structure and $\cF_\Lambda$ be its associated foliation. Suppose that $\Lambda'$ is an associated Nambu structure of $\cF_\Lambda$. 
\begin{itemize}
\item{} If $\mathrm{codim}S(\Lambda)=2$, then $\Lambda'=u\Lambda$ for some invertible function $u$.
\item{} If $\Lambda=\prod f_i^{m_i}\prod g_j^{m_j} \Lambda_1$, where $\mathrm{codim}S(\Lambda_1)\geq 2$, $f_i,g_j$ are irreducible, $f_i\in\mu(\Lambda)$, $g_j\not\in\mu(\Lambda)$, then $\Lambda'=u\prod g_j\Lambda_1$ for some invertible function $u$.
\end{itemize}
\end{prop}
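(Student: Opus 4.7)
The strategy in both cases is to exhibit a concrete candidate for $\Lambda'$ directly out of the data of $\Lambda$ and then invoke the uniqueness clause of the proposition preceding Definition \ref{def:assoNambu}: associated Nambu structures of a given foliation are unique up to multiplication by an invertible function.

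For the first bullet I would show that $\Lambda$ itself already satisfies all four clauses of Definition \ref{def:assoNambu} with respect to its own associated foliation $\cF_\Lambda$. Condition 1 follows from the normal form at a regular point of $\Lambda$ combined with the calculation in the proof of Proposition \ref{prop:NambuTangentToAssoFol}, which identifies $\cF_\Lambda$ near such a point with the span of the coordinate vector fields $\partial/\partial x_1,\dots,\partial/\partial x_q$. Condition 2 is vacuous because $S(\cF_\Lambda)\subset S(\Lambda)$. Condition 3 is automatic since $S(\Lambda)\setminus S(\cF_\Lambda)\subset S(\Lambda)$ is codimension-two by hypothesis. Condition 4 is forced by the same hypothesis: in the holomorphic category a decomposition $\Lambda=f^2\Lambda_0$ with $f$ vanishing somewhere would contribute a codimension-one divisor $\{f=0\}$ to $S(\Lambda)$, contradicting $\mathrm{codim}\,S(\Lambda)=2$. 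Uniqueness then yields an invertible $u$ with $\Lambda'=u\Lambda$.

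For the second bullet I would take as candidate $\widetilde\Lambda := \prod_j g_j\,\Lambda_1$ and verify that it is an associated Nambu structure of $\cF_\Lambda$. The key intermediate claim is $\cF_\Lambda=\cF_{\widetilde\Lambda}$. Writing $\Lambda = F\Lambda_1$ with $F = \prod_i f_i^{m_i}\prod_j g_j^{n_j}$, expanding $\cL_X(F\Lambda_1) = X(F)\Lambda_1 + F\cL_X\Lambda_1$ shows that $X\in\CIT(\Lambda)$ iff $X\in\CIT(\Lambda_1)$ together with $F\mid X(F)$. Because the irreducible factors are pairwise coprime, this divisibility breaks into the separate conditions $f_i\mid X(f_i)$ and $g_j\mid X(g_j)$. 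The assumption $f_i\in\mu(\Lambda)$, via the argument underlying Lemma \ref{lem:210} applied after factoring out $f_i^{m_i}$, forces $f_i\mid X(f_i)$ to hold already for every $X\in\CIT(\Lambda_1)$, so the $f_i$-factors impose no new constraint; conversely $g_j\notin\mu(\Lambda)$ gives a genuine constraint $g_j\mid X(g_j)$, and this constraint depends on $g_j$ but not on the exponent $n_j$. Hence replacing $F$ by $\prod_j g_j$ leaves $\CIT$ unchanged and $\cF_\Lambda=\cF_{\widetilde\Lambda}$.

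It remains to verify the four conditions of Definition \ref{def:assoNambu} for $\widetilde\Lambda$. Its singular set $\bigcup_j\{g_j=0\}\cup S(\Lambda_1)$ has codimension-one part precisely $\bigcup_j\{g_j=0\}$, and the CIT analysis above identifies this with the codimension-one part of $S(\cF_\Lambda)$, giving Conditions 2 and 3. Condition 1 is the local-form check at a regular point of $\widetilde\Lambda$, and Condition 4 holds because each $g_j$ appears to the first power while $\Lambda_1$ contributes no codimension-one singularities. Uniqueness then delivers $\Lambda'=u\widetilde\Lambda = u\prod_j g_j\,\Lambda_1$. The principal obstacle will be Step 1 of the second bullet: extending Lemma \ref{lem:210} to the case $f\mid\Lambda$, where $[f,\Lambda]$ is automatically divisible by $f$ but the quotient is not automatically a Nambu structure, and translating the Nambu-integrability of this quotient into the sharp divisibility $f\mid X(f)$ on all of $\CIT(\Lambda_1)$ rather than just $\CIT(\Lambda)$. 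A secondary but technical issue is tracking the codimensions of zero loci of holomorphic functions when manipulating the factorization $\Lambda=F\Lambda_1$.
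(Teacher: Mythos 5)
Your treatment of the first bullet is fine and is essentially the paper's argument (show that $\Lambda$ itself satisfies Definition \ref{def:assoNambu} for $\cF_\Lambda$, then invoke uniqueness). For the second bullet, your packaging --- exhibit the candidate $\widetilde\Lambda=\prod_j g_j\,\Lambda_1$, prove $\CIT(\Lambda)=\CIT(\widetilde\Lambda)$ via the equivalence ``$X\in\CIT(\Lambda)$ iff $X\in\CIT(\Lambda_1)$ and $F\mid X(F)$'' and the factorwise divisibilities, then appeal to uniqueness --- is a legitimate all-at-once version of what the paper does one irreducible factor at a time through Lemma \ref{lem:NambuFolNambu}. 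But there is a genuine gap exactly where you write that ``the CIT analysis above identifies $\bigcup_j\{g_j=0\}$ with the codimension-one part of $S(\cF_\Lambda)$.'' The inclusion $S(\cF_\Lambda)=S(\cF_{\widetilde\Lambda})\subset S(\widetilde\Lambda)$ (Proposition \ref{prop:NambuTangentToAssoFol}) rules out extra codimension-one components, but Condition 3 of Definition \ref{def:assoNambu} for $\widetilde\Lambda$ requires the \emph{reverse} inclusion $\{g_j=0\}\subset S(\cF_\Lambda)$, and your CIT computation does not yield it by itself: knowing that every $X\in\CIT(\Lambda)$ satisfies $g_j\mid X(g_j)$, i.e.\ is tangent to the hypersurface $\{g_j=0\}$, does not yet say that the distribution generated by $\CIT(\Lambda)$ drops rank there. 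You still need the argument that is the actual crux of the paper's Lemma \ref{lem:NambuFolNambu} in the case $f\notin\mu(\Lambda)$: since $g_j\notin\mu(\Lambda_1)$, Lemma \ref{lem:210} provides $Y\in\CIT(\Lambda_1)$ with $g_j\nmid Y(g_j)$, so at a generic point $P$ of $\{g_j=0\}$ one has $Y(g_j)(P)\neq 0$ and the leaf of $\Lambda_1$ through $P$ is transverse to the hypersurface, while every element of $\CIT(\Lambda)$ is tangent to it; hence the span of $\CIT(\Lambda)$ at $P$ has rank at most $q-1<q$, so $P\in S(\cF_\Lambda)$, and closedness of $S(\cF_\Lambda)$ together with irreducibility of $g_j$ gives $\{g_j=0\}\subset S(\cF_\Lambda)$ (the paper proves this by a local coordinate/saturation argument with $Y=\partial/\partial x_1$). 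Without this step your verification of Condition 3, and therefore the appeal to uniqueness, does not go through.

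By contrast, the issue you single out as the ``principal obstacle'' --- extending Lemma \ref{lem:210} to the case $f\mid\Lambda$ --- dissolves once the hypothesis is read correctly, and in fact cannot be resolved the way you suggest. If one reads $f_i\in\mu(\Lambda)$ literally with $f_i\mid\Lambda$, the condition is automatic: writing $\Lambda=f\Lambda''$ one gets $[f,\Lambda]=\pm f\,(df\lrcorner\Lambda'')$, and $df\lrcorner\Lambda''$ is a Hamiltonian Nambu structure of order $q-1$, so \emph{every} irreducible factor of $\Lambda$ would lie in $\mu(\Lambda)$; then no $g_j$ could occur and the second bullet would assert $\Lambda'=u\Lambda_1$, which is false already for Example \ref{ex:2.4}. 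So the hypothesis must be understood as $f_i\in\mu(\Lambda_1)$ and $g_j\notin\mu(\Lambda_1)$ (the convention of Lemma \ref{lem:NambuFolNambu}, where $f\nmid\Lambda$), and with that reading Lemma \ref{lem:210} applies directly to $\Lambda_1$, no extension needed. In short: the divisibility bookkeeping in your Step 1 is fine once the hypothesis is interpreted relative to $\Lambda_1$, and the real missing piece is the singular-set statement $\{g_j=0\}\subset S(\cF_\Lambda)$ described above.
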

\begin{proof}
First consider the case $\mathrm{codim}S(\Lambda)\geq 2$. By Proposition \ref{prop:NambuTangentToAssoFol},  $S(\cF_\Lambda)\subset S(\Lambda)$. Consequently, $\mathrm{codim}S(\cF_\Lambda)\geq 2$ and this implies that $\Lambda$ is an associated Nambu structure of $\cF$. By the uniqueness of associated Nambu structure, $\Lambda=u\Lambda'$ for some invertible function $u$. 

The case $\mathrm{codim}S(\Lambda)=1$ is a direct consequence of Lemma \ref{lem:NambuFolNambu} below. \end{proof}
\begin{lemma}\label{lem:NambuFolNambu}
Let $f^m\Lambda$ be a holomorphic Nambu structure and $\cF_{f^m\Lambda}$ be its associated foliation, where $f$ is an irreducible function and $f\nmid\Lambda$. Suppose that $\Lambda'$ is an associated Nambu structure of $\cF_{f^m\Lambda}$. 
\begin{itemize}
\item{} If $f\in\mu(\Lambda)$, then $\Lambda=g\Lambda'$ for some holomorphic function $g$.
\item{} If $f\notin\mu(\Lambda)$, then $\Lambda'=f\Lambda''$ and $\Lambda=g\Lambda''$ for some holomorphic function $g$. 
\end{itemize}
\end{lemma}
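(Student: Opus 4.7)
The plan is to handle both cases by showing that an appropriate multiple of $\Lambda$ is a tangent Nambu structure of $\cF_{f^m\Lambda}$ and then applying the uniqueness proposition for associated Nambu structures established at the start of Subsection 2.1. The local model at a regular point of $\Lambda$ takes $\Lambda=\frac{\partial}{\partial x_1}\wedge\cdots\wedge\frac{\partial}{\partial x_q}$. For a tangent vector field $X=\sum_{i=1}^q a_i\frac{\partial}{\partial x_i}$ a direct computation gives
\begin{equation*}
\cL_X(f^m\Lambda)=f^{m-1}\bigl(mX(f)-f\,\mathrm{div}(X)\bigr)\Lambda,
\end{equation*}
so $X\in\CIT(f^m\Lambda)$ exactly when $f\mid X(f)$. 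By Lemma \ref{lem:210}, $f\in\mu(\Lambda)$ is equivalent to $f\mid X(f)$ for every $X\in\CIT(\Lambda)$, in particular for each $\frac{\partial}{\partial x_i}$.

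In the first case ($f\in\mu(\Lambda)$), each $\frac{\partial}{\partial x_i}$ therefore lies in $\CIT(f^m\Lambda)$, so at every regular point of $\Lambda$ the distribution generated by $\CIT(f^m\Lambda)$ contains the $q$-dimensional tangent space of $\cF_\Lambda$; and it cannot strictly exceed it, since tangency to $f^m\Lambda$ forces $X\in\mathrm{span}(\frac{\partial}{\partial x_1},\ldots,\frac{\partial}{\partial x_q})$. Hence $\cF_{f^m\Lambda}$ is regular off $S(\Lambda)$, giving $S(\cF_{f^m\Lambda})\subset S(\Lambda)$, and $\Lambda$ itself satisfies Conditions 1 and 2 of Definition \ref{def:assoNambu} with respect to $\cF_{f^m\Lambda}$. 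The uniqueness statement for tangent Nambu structures then yields $\Lambda=g\Lambda'$ with $g$ holomorphic.

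In the second case ($f\notin\mu(\Lambda)$), some $\frac{\partial}{\partial x_i}(f)\notin(f)$, so $df$ does not vanish identically on $\{f=0\}$ when restricted to the leaf directions. At a generic point $p\in\{f=0\}$ with $\Lambda(p)\neq 0$, the CIT criterion at $p$ cuts the distribution down to the kernel of $df$ inside $\mathrm{span}(\frac{\partial}{\partial x_1},\ldots,\frac{\partial}{\partial x_q})|_p$, a hyperplane of dimension $q-1$; any vector there is realised by a genuine element of $\CIT(f^m\Lambda)$ obtained by choosing the $a_i$ on $\{f=0\}$ so that $\sum a_i\frac{\partial}{\partial x_i}(f)$ vanishes there and extending off $\{f=0\}$ arbitrarily. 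Hence $p\in S(\cF_{f^m\Lambda})$ on a dense subset of $\{f=0\}$, and by closure $\{f=0\}\subset S(\cF_{f^m\Lambda})$. The remark after Definition \ref{def:assoNambu} (codimension-one parts of $S(\Lambda')$ and $S(\cF_{f^m\Lambda})$ coincide) then forces $\{f=0\}\subset S(\Lambda')$; irreducibility of $f$ gives $f\mid\Lambda'$, and Condition 4 pins the multiplicity to one, so $\Lambda'=f\Lambda''$ with $f\nmid\Lambda''$.

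To produce $g$ with $\Lambda=g\Lambda''$, the idea is to verify that $f\Lambda$ is a tangent Nambu structure of $\cF_{f^m\Lambda}$ and invoke uniqueness once more. The coordinate change $y_1=\int\! dx_1/f$ (with $x_2,\ldots,x_n$ held as parameters) and $y_j=x_j$ for $j\geq 2$ yields $\frac{\partial}{\partial y_1}=f\frac{\partial}{\partial x_1}$; any $\frac{\partial}{\partial x_1}$-correction appearing in $\frac{\partial}{\partial y_j}$ for $j\geq 2$ is annihilated when wedged against $\frac{\partial}{\partial y_1}$, so $f\Lambda=\frac{\partial}{\partial y_1}\wedge\cdots\wedge\frac{\partial}{\partial y_q}$ wherever $f\neq 0$. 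Thus $f\Lambda$ is Nambu with the same regular foliation as $\Lambda$, and $S(f\Lambda)=S(\Lambda)\cup\{f=0\}$ contains $S(\cF_{f^m\Lambda})$, handling Condition 2. Uniqueness now gives $f\Lambda=g\Lambda'=gf\Lambda''$, and cancelling the non-zero-divisor $f$ yields $\Lambda=g\Lambda''$. The main obstacle will be the dimension-drop argument in the second case: one has to be careful that the pointwise hyperplane at $p\in\{f=0\}$ is actually realised by holomorphic CIT vector fields on a neighbourhood of $p$ (and not merely by tangent vectors at $p$), so that it genuinely bounds the leaf dimension at $p$ from above.
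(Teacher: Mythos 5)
Your proof is correct, and its skeleton matches the paper's (the criterion that a tangent field $X$ lies in $\CIT(f^m\Lambda)$ iff $f\mid X(f)$, the use of Lemma \ref{lem:210}, and the key step $\{f=0\}\subset S(\cF_{f^m\Lambda})$ in the second case), but your mechanisms differ in two places, both in a reasonable way. In the case $f\notin\mu(\Lambda)$ the paper argues by contradiction: at a point of $\{f=0\}$ where a CIT field $Y$ of $\Lambda$ with $f\nmid Y(f)$ is nonzero, regularity of $\cF_{f^m\Lambda}$ would force it to coincide locally with its saturation $\cF_\Lambda$, producing a CIT field of the form $a\,\partial/\partial x_1$, which is impossible; you instead do a direct rank count: every CIT field of $f^m\Lambda$ is tangent to $\Lambda$ with $f\mid X(f)$, so its value at a generic $p\in\{f=0\}$ lies in the leafwise kernel of $df$, giving rank at most $q-1$ there against rank $q$ at nearby points off $\{f=0\}$ (Hamiltonian fields of $\Lambda$), hence $p\in S(\cF_{f^m\Lambda})$. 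Note that the ``main obstacle'' you flag — realizing the whole hyperplane by holomorphic CIT fields — is not needed for this conclusion (only the upper bound and the generic rank $q$ matter), and it is easy anyway: the fields $\partial_i(f)\,\partial_j-\partial_j(f)\,\partial_i$ are CIT for $f^m\Lambda$ and span the hyperplane, whereas your ``extend arbitrarily off $\{f=0\}$'' phrasing is the only loose point there (it works because, $f$ being irreducible, vanishing of $\sum a_i\partial_i(f)$ on $\{f=0\}$ already gives divisibility by $f$). In the case $f\in\mu(\Lambda)$ the paper proves $\cF_{f^m\Lambda}=\cF_\Lambda$ by comparing $\CIT$ sets and then compares associated Nambu structures, while you verify directly that $\Lambda$ satisfies Conditions 1 and 2 of Definition \ref{def:assoNambu} for $\cF_{f^m\Lambda}$ and invoke the ``moreover'' clause of the uniqueness proposition of Subsection 2.1 — same substance, slightly more economical. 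Finally, the concluding identity $\Lambda=g\Lambda''$, which the paper dismisses as ``easy to show'', you actually prove: you correctly observe that $\Lambda$ itself is not tangent to $\cF_{f^m\Lambda}$ in the sense of Definition \ref{def:assoNambu} (Condition 2 fails along $\{f=0\}$), so you pass to $f\Lambda$, check via the coordinate change $y_1=\int dx_1/f$ that $f\Lambda$ is again a Nambu structure with the same regular foliation, use Proposition \ref{prop:NambuTangentToAssoFol} for Condition 2, apply uniqueness to get $f\Lambda=g\Lambda'=gf\Lambda''$, and cancel $f$; this fills in a step the paper leaves to the reader.
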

\begin{proof}
 We first show that $\CIT(f^m\Lambda)\subset\CIT(\Lambda)$. If $X\in\CIT(f^m\Lambda)$, then there is a holomorphic function $h$ such that 
\begin{equation*}
\cL_{X}(f^m\Lambda)=hf^m\Lambda.
\end{equation*}
It leads to
\begin{equation*}
mX(f)\Lambda+f\cL_X\Lambda=hf\Lambda.
\end{equation*}
Using the same argument as in the proof of Proposition \ref{prop:Tang&Asso}, we have 
$$\cL_X\Lambda=s\Lambda,$$
where $s$ is a holomorphic function defined outside $S(\Lambda)$. Since $f\nmid\Lambda$, $X(f)$ must vanish on zero locus of $f$. It implies that $f|X(f)$ and hence $X\in\CIT(\Lambda)$.
Let $\cF_\Lambda$ be the associated foliation of $\Lambda$.
 
If $f\in\mu(\Lambda)$, we claim that 
$\cF_{f^m\Lambda}=\cF_{\Lambda}$. 
Indeed, suppose that $X\in\CIT(\Lambda)$, then there is a function $h'$ such that 
$$\cL_X(\Lambda)=h'\Lambda.$$
Hence, $$\cL_X(f^m\Lambda)=\left(h'+\frac{m\cL_X(f)}{f}\right)f^m\Lambda.$$
Since  $\cF_{f^m\Lambda}=\cF_{\Lambda}$, $\Lambda'$ is also an associated Nambu structure of $\cF_\Lambda$ whose singular set doesn't contain $\{f=0\}$. It implies that  $\Lambda=g\Lambda'$ for some holomorphic function $g$.  

Now consider the case $f\notin\mu(\Lambda)$. Let $Y\in\CIT(\Lambda)$ such that $f\nmid Y(f)$ then $Y\not\in\CIT(f^m\Lambda)$. We claim that at every point $P$ which  satisfies $Y(P)\neq 0$ and $f(P)=0$ is a singularity of $\cF_{f^m\Lambda}$. Indeed, choose a coordinate system $(x_1,\hdots,x_q)$ at $P$ such that $Y=\frac{\partial}{\partial x_1}$ and 
$$\Lambda=\frac{\partial}{\partial x_1}\wedge\hdots\wedge\frac{\partial}{\partial x_q}.$$
Suppose that $\cF_{f^m\Lambda}$ is regular at $P$. Because $\cF_{\Lambda}$ is a saturation of $\cF_{f^m\Lambda}$, we must have $\cF_{f^m\Lambda}=\cF_{\Lambda}$ locally near $P$. Therefore, $\{x_2=c_2,\cdots,x_n=c_n\}$ are invariant curves of $\cF_{f^m\Lambda}$. It implies that $a\frac{\partial}{\partial x_1}\in\CIT(\cF_{f^m\Lambda})$ for some function $a$. But this can not happen when $f\nmid\frac{\partial f}{\partial x_1}$.  In conclusion, $$\{P:f(P)=0, Y(P)\neq 0\}\subset S(\cF_{f^m\Lambda})$$ 
Since $f\nmid Y$ and $S(\cF_{f^m\Lambda})$ is closed, we have  $\{f=0\}\subset S(\cF_{f^m\Lambda})$ and hence $\{f=0\}\subset S(\Lambda')$.  Therefore, $\Lambda'=f\Lambda''$ for some Nambu structure $\Lambda''$. It is easy to show that $\Lambda=g\Lambda''$ for some holomorphic function $g$. \end{proof}

\begin{cor}\label{cor:DecomNambu}
Let $\Lambda$ be a holomorphic Nambu structure and $\mathcal{F}$ be its associated foliation. Then locally near any point $x$, we can write
\begin{equation*}
\Lambda=g\cdot\frac{\partial}{\partial x_1}\wedge\hdots\wedge\frac{\partial}{\partial x_k}\wedge\Pi,
\end{equation*}
where $\{x_{k+1}=\cdots=x_{n}=0\}$ is the leaf of $\mathcal{F}$ passing through $x$, $g$ is a holomorphic function and $\frac{\partial}{\partial x_i}$, $i=1,\hdots,k$, commutes with $\Pi$.  
\end{cor}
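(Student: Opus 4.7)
The plan is induction on $k$, the dimension of the leaf of $\mathcal{F}=\mathcal{F}_\Lambda$ through $x$. The base case $k=0$ is trivial: the empty wedge equals $1$, the commutation condition is vacuous, and one takes $g=1$, $\Pi=\Lambda$.

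For the inductive step, I would pick a CIT vector field $Y$ of $\Lambda$ with $Y(x)\neq 0$ (which exists since $\dim\CIT(\Lambda)_x=k\geq 1$) and use the straightening theorem to choose local holomorphic coordinates $(z_1,\ldots,z_n)$ centered at $x$ with $Y=\partial/\partial z_1$. Tangency $Y\wedge\Lambda=0$ forces every multi-index in the expansion of $\Lambda$ to contain $1$, so
\[\Lambda=\frac{\partial}{\partial z_1}\wedge\Lambda^{(1)},\]
where $\Lambda^{(1)}$ is a $(q-1)$-vector field not involving $\partial/\partial z_1$. Conformal invariance $\mathcal{L}_{\partial/\partial z_1}\Lambda=f\Lambda$ descends to $\mathcal{L}_{\partial/\partial z_1}\Lambda^{(1)}=f\Lambda^{(1)}$. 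Letting $h$ be a local nonvanishing holomorphic function with $\partial_{z_1}\log h=-f$ (solvable as an ODE in $z_1$), the rescaled tensor $\Pi^{(1)}:=h\Lambda^{(1)}$ commutes with $\partial/\partial z_1$, and $\Lambda=h^{-1}(\partial/\partial z_1)\wedge\Pi^{(1)}$.

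Before invoking the induction hypothesis on the transversal slice $T=\{z_1=0\}$, two things must be verified. \emph{First}, that $\Pi^{(1)}$ is a Nambu structure of order $q-1$: at any regular point of $\Pi^{(1)}$ the tensor $\Lambda$ is regular, and choosing adapted coordinates $(z_1,w_2,\ldots,w_q,w_{q+1},\ldots,w_n)$ that leave $z_1$ fixed and put $\Lambda$ in Nambu normal form $\Lambda=\partial_{z_1}\wedge\partial_{w_2}\wedge\cdots\wedge\partial_{w_q}$ yields $\Pi^{(1)}=h\,\partial_{w_2}\wedge\cdots\wedge\partial_{w_q}$, which is decomposable with an integrable distribution and hence Nambu of order $q-1$. \emph{Second}, that the associated foliation $\mathcal{F}_{\Pi^{(1)}}$ has a $(k-1)$-dimensional leaf through $x$: a $z_1$-independent vector field $Z$ is CIT of $\Lambda$ if and only if it is CIT of $\Pi^{(1)}$, because $[Z,\partial_{z_1}]=0$ gives $\mathcal{L}_Z(\partial_{z_1}\wedge\Pi^{(1)})=\partial_{z_1}\wedge\mathcal{L}_Z\Pi^{(1)}$. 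Since $\partial/\partial z_1$ already lies in the $k$-dimensional $\CIT(\Lambda)_x$, the transversal projection has dimension $k-1$.

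Applying the induction hypothesis to $\Pi^{(1)}$ on $T$ gives coordinates (still denoted $z_2,\ldots,z_n$) in which $\{z_{k+1}=\cdots=z_n=0\}\cap T$ is the leaf of $\mathcal{F}_{\Pi^{(1)}}$ through $x$, a holomorphic function $g'$, and a Nambu structure $\Pi$ of order $q-k$ on $T$ with
\[\Pi^{(1)}=g'\,\frac{\partial}{\partial z_2}\wedge\cdots\wedge\frac{\partial}{\partial z_k}\wedge\Pi,\qquad[\partial_{z_i},\Pi]=0\text{ for }i=2,\ldots,k.\]
Extending $g'$ and $\Pi$ $z_1$-independently and substituting, $\Lambda=(h^{-1}g')\,\partial_{z_1}\wedge\cdots\wedge\partial_{z_k}\wedge\Pi$ with $[\partial_{z_1},\Pi]=0$ from $z_1$-independence; the leaf of $\mathcal{F}$ through $x$ is $\{z_{k+1}=\cdots=z_n=0\}$, being the union of integral curves of $\partial/\partial z_1$ through the leaf of $\mathcal{F}_{\Pi^{(1)}}$. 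The main obstacle is the clean verification of (i) the Nambu property of the residual tensor $\Pi^{(1)}$ and (ii) the CIT/leaf correspondence across the $\partial/\partial z_1$-extension; both reduce to careful bookkeeping with the Nambu normal form and standard Lie derivative identities, after which the induction itself is essentially formal.
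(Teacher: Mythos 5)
Your route is genuinely different from the paper's. The paper works directly with the singular foliation: it takes a distinguished chart in which $\cF_\Lambda$ is locally the product of the $k$-disk through $x$ with its restriction $\cF_0$ to a transversal, picks a local associated Nambu structure $\Pi$ of $\cF_0$ (existence and uniqueness of associated Nambu structures), observes that $\Lambda'=\frac{\partial}{\partial x_1}\wedge\cdots\wedge\frac{\partial}{\partial x_k}\wedge\Pi$ is an associated Nambu structure of $\cF_\Lambda$, and concludes $\Lambda=g\Lambda'$ from Proposition \ref{prop:NambuFolNambu}. You instead peel off one CIT direction at a time by straightening and rescaling, never invoking that machinery. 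Most of your individual steps are sound: the factorization $\Lambda=\frac{\partial}{\partial z_1}\wedge\Lambda^{(1)}$ from tangency, the descent of conformal invariance to $\Lambda^{(1)}$, the ODE for $h$, and the Nambu property of $\Pi^{(1)}$ (it is, up to a nonvanishing factor, the local reduction of $\Lambda$ by $\frac{\partial}{\partial z_1}$ in the sense of Section 3.2).

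The gap is in closing the induction. You need the leaf of $\cF_{\Pi^{(1)}}$ through $x$ in $T=\{z_1=0\}$ to have dimension exactly $k-1$, and, for the identification of the leaf of $\cF_\Lambda$ with $\{z_{k+1}=\cdots=z_n=0\}$, you need $\cF_\Lambda$ to be locally the product of the $z_1$-lines with $\cF_{\Pi^{(1)}}$. Your stated equivalence (``a $z_1$-independent vector field is CIT of $\Lambda$ iff it is CIT of $\Pi^{(1)}$'') gives only one half: it shows $\CIT(\Pi^{(1)})$ injects into $\CIT(\Lambda)$, hence the dimension is at most $k-1$ and the product is contained in the leaf. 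It says nothing about arbitrary elements of $\CIT(\Lambda)$, which need not be $z_1$-independent nor tangent to $T$, and it is these fields that span the $k$-dimensional tangent space of the leaf; so the sentence ``the transversal projection has dimension $k-1$'' presupposes exactly the missing direction. What must be proved is: if $W\in\CIT(\Lambda)$ and $W=W^1\frac{\partial}{\partial z_1}+W'$ with $W'$ free of $\frac{\partial}{\partial z_1}$, then $W'$, restricted to a slice $\{z_1=\mathrm{const}\}$ and extended $z_1$-independently, lies in $\CIT(\Pi^{(1)})$. This is true and fixable: write $\cL_W\bigl(\frac{\partial}{\partial z_1}\wedge\Pi^{(1)}\bigr)=f\,\frac{\partial}{\partial z_1}\wedge\Pi^{(1)}$, use the identity $[W^1\frac{\partial}{\partial z_1},\Pi^{(1)}]=-\frac{\partial}{\partial z_1}\wedge i_{dW^1}\Pi^{(1)}$, and separate the terms containing $\frac{\partial}{\partial z_1}$ from those that do not; this yields $W'\wedge\Pi^{(1)}=0$ and $\cL_{W'}\Pi^{(1)}=(f+\frac{\partial W^1}{\partial z_1})\Pi^{(1)}$, and the freezing of $z_1$ is legitimate because $\cL_{W'}\Pi^{(1)}$ involves no $z_1$-derivatives of $W'$. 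With this supplement your induction is complete; without it, the inductive hypothesis could a priori split off fewer than $k$ coordinate fields and the asserted description of the leaf through $x$ would not follow.
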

\begin{proof}
Choose a system of coordinates $(x_1,\hdots,x_n)$ such that the leaf of $\cF$ passing through $x$ is the $k$-dimensional disk $\{x_{k+1}=\cdots=x_n=0\}$ and each $k$-dimensional disk $\{x_{k+1}=c_{k+1},\hdots, x_n=c_n\}$ is wholly contained in some leaf of $\cF$. Denote by $\cF_0$ the restriction of $\cF$ of the disk $\{x_1=\cdots=x_k=0\}$. Let $\Pi$ be a associated Nambu structure of $\cF_0$. Then, it is clear that for each $i=1,\hdots,k$, $\partial x_i$ commutes with $\Pi$ and
\begin{equation*}
\Lambda'=\frac{\partial}{\partial x_1}\wedge\hdots\wedge\frac{\partial}{\partial x_k}\wedge\Pi,
\end{equation*}
be an associated Nambu structure of $\cF$. By Proposition \ref{prop:NambuFolNambu}, we have $\Lambda=g\Lambda'$ for some function $g$.
\end{proof}


\begin{prop}
Let $\cF$ be a holomorphic singular foliation and $\Lambda$ be its associated Nambu structure. Suppose that $\cF_\Lambda$ is an associated foliation of $\Lambda$ then $\cF_\Lambda$ is a saturation of $\cF$. Moreover, if $\mathrm{codim}S(\cF)\geq 2$ then $\mathrm{codim}S(\cF_\Lambda)\geq 2$. 
\end{prop}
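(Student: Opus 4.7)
The plan is to verify, for any local holomorphic vector field $X$ tangent to $\cF$, that $X$ lies in $\CIT(\Lambda)$; once this is done, the distribution generating $\cF$ sits inside the distribution generating $\cF_\Lambda$, so every leaf of $\cF$ is contained in a leaf of $\cF_\Lambda$, which is the saturation property. The codimension statement will then follow quickly from Proposition \ref{prop:NambuTangentToAssoFol}.

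For the tangency $X\wedge\Lambda=0$, I would work on the open dense complement of $S(\cF)\cup S(\Lambda)$. By Condition~1 of Definition \ref{def:assoNambu}, around any such point there exist local coordinates in which $\Lambda=\partial/\partial x_1\wedge\cdots\wedge\partial/\partial x_q$ and the $\partial/\partial x_i$ generate $\cF$, so $X$ is a local combination of the $\partial/\partial x_i$ and the wedge vanishes. Holomorphicity then propagates $X\wedge\Lambda=0$ to all of $M$.

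For the conformal invariance $\cL_X\Lambda=h\Lambda$, I would split into two cases. If $\mathrm{codim}\,S(\Lambda)\geq 2$, Proposition \ref{prop:Tang&Asso} applies directly. If $\mathrm{codim}\,S(\Lambda)=1$, write $S(\Lambda)=\{s=0\}$ with $s$ reduced and, using Condition~4 of Definition \ref{def:assoNambu}, decompose $\Lambda=s\Lambda_1$ with $\mathrm{codim}\,S(\Lambda_1)\geq 2$. Because the flow of $X$ moves along leaves of $\cF$ it preserves the stratification by leaf dimension and hence $S(\cF)$; and because Conditions 2 and 3 of Definition \ref{def:assoNambu} force the codimension-one parts of $S(\cF)$ and $S(\Lambda)$ to coincide, the flow of $X$ preserves $\{s=0\}$, which forces $s\mid X(s)$, i.e.\ $X(s)=ts$ for some holomorphic $t$. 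Applying Proposition \ref{prop:Tang&Asso} to $\Lambda_1$ gives $\cL_X\Lambda_1=h_1\Lambda_1$, and combining these,
\[
\cL_X\Lambda = X(s)\Lambda_1 + s\cL_X\Lambda_1 = (t+h_1)\Lambda,
\]
as required.

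For the codimension claim, if $S(\Lambda)$ had a codimension-one component $C$ then Condition~3 of Definition \ref{def:assoNambu} would force $C\subset S(\cF)$, contradicting $\mathrm{codim}\,S(\cF)\geq 2$. Hence $\mathrm{codim}\,S(\Lambda)\geq 2$, and Proposition \ref{prop:NambuTangentToAssoFol} yields $S(\cF_\Lambda)\subset S(\Lambda)$, so $\mathrm{codim}\,S(\cF_\Lambda)\geq 2$. The main technical step is the codimension-one case of the conformal invariance argument, where one must combine the matching of the codimension-one strata of $S(\cF)$ and $S(\Lambda)$ with the fact that a vector field tangent to a singular foliation must preserve its singular set; all the remaining pieces are either bookkeeping or direct applications of already-proved propositions.
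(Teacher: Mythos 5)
Your argument is correct, but it goes by a different route than the paper. The paper's proof is constructive and local: near a point $P$ where a tangent vector field $X$ of $\cF$ does not vanish, it straightens $X=\partial/\partial x_1$, restricts $\cF$ to a transversal slice, takes an associated Nambu structure $\Pi$ of that restriction, observes that $\Lambda'=\frac{\partial}{\partial x_1}\wedge\Pi$ is an associated Nambu structure of $\cF$ for which $\partial/\partial x_1$ is obviously a CIT vector field, and then transfers the CIT property to $\Lambda$ via the local uniqueness of associated Nambu structures up to an invertible factor. You instead verify directly that \emph{every} holomorphic vector field tangent to $\cF$ lies in $\CIT(\Lambda)$: tangency $X\wedge\Lambda=0$ by density, and conformal invariance either by Proposition \ref{prop:Tang&Asso} (Riemann extension) when $\mathrm{codim}\,S(\Lambda)\geq 2$, or, when $\mathrm{codim}\,S(\Lambda)=1$, via the factorization $\Lambda=s\Lambda_1$ from Condition 4 together with the identification of the codimension-one strata of $S(\cF)$ and $S(\Lambda)$ from Conditions 2--3, yielding $s\mid X(s)$. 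This is exactly the divisibility technique of Lemmas \ref{lem:210} and \ref{lem:NambuFolNambu}, so your proof is more self-contained (it needs neither the slice construction nor the existence/uniqueness statement for $\Pi$ and $\Lambda'$), it makes explicit where reducedness of $s$ is used, and it proves the slightly stronger fact that tangent vector fields are CIT even at their zeros; the paper's route is shorter given the splitting machinery already developed in Corollary \ref{cor:DecomNambu}. Two small points you should make explicit: before invoking Proposition \ref{prop:Tang&Asso} for $\Lambda_1$ you need that $\Lambda_1$ is again a holomorphic Nambu structure and that $X\wedge\Lambda_1=0$ (both follow since $X\wedge\Lambda=s\,(X\wedge\Lambda_1)=0$ and the decomposability and integrability conditions are closed); and the step ``the flow of $X$ preserves $S(\cF)$'' rests on $S(\cF)$ being a union of leaves (leaf dimension is invariant under flows of vector fields tangent to the leaves) --- alternatively, at a generic smooth point $p$ of $\{s=0\}$ the leaf through $p$ lies in $\{s=0\}$, so $X(p)\in T_p\{s=0\}$ and $X(s)$ vanishes on $\{s=0\}$, which gives $s\mid X(s)$ without any flow argument. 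The codimension-two statement is handled the same way as in the paper.
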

\begin{proof}
If $\mathrm{codim}S(\cF)\geq 2$, then $\mathrm{codim}S(\Lambda)\geq 2$. By Proposition \ref{prop:NambuTangentToAssoFol}, we have $\mathrm{codim}S(\cF_\Lambda)\geq 2$. Let us now show that $\cF_\Lambda$ is a saturation of $\cF$. Fix a point $P$ of the foliation and assume that $X$ is a tangent vector field of $\cF$ which satisfies $X(P)\neq 0$. We will show that $X$ is also a tangent vector of $\cF_\Lambda$. Choose a coordinate system $(x_1,\hdots,x_n)$ at $P$ such that $X=\frac{\partial}{\partial x_1}$ and every disk $\{x_2=c_2,\hdots,x_n=c_n\}$ is contained in a leaf of $\cF$. Denote by $\cF_0$ the restriction of $\cF$ on $\{x_1=0\}$. Suppose $\Pi$ an associated Nambu structure of $\cF_0$, then $\Lambda'=\frac{\partial}{\partial x_1}\wedge\Pi$ is an associated Nambu structure of $\cF$. It is clear that $\frac{\partial}{\partial x_1}$ is a CIT of $\Lambda'$ and hence a CIT of $\Lambda$. Therefore, $X$ is tangent to $\cF_\Lambda$.    
\end{proof}

\section{Commuting Nambu tensors}

\subsection{The case $q_1 + q_2 \leq n$}

\begin{defn} \label{defn:CommutingNambu1}
 Let $\Lambda_1$ and $\Lambda_2$ be Nambu tensors of orders $q_1$ and $q_2$ respectively on a manifold $M$ of dimension $n$, 
such that $q_1 + q_2 \leq n$ and $\Lambda_1 \wedge \Lambda_2 \neq 0$ almost everywhere.
Then we will say that $\Lambda_1$ and $\Lambda_2$ {\bf commute} with each other 
if their Schouten bracket vanishes:
\begin{equation}
[\Lambda_1, \Lambda_2] = 0. 
\end{equation}
\end{defn}

\begin{prop}[\cite{DufourZung-PoissonBook}] \label{prop:CommutingNambu1}
 Let $\Lambda_1$ and $\Lambda_2$ be two commuting 
Nambu tensors of orders $q_1$ and $q_2$ respectively on a manifold $M$ of dimension $n$, with $q_1 + q_2 \leq n.$ Suppose that
$\Lambda_1 (O) \wedge \Lambda_2 (O) \neq 0$ at a point $O \in M.$ Then $\Lambda_1$ and $\Lambda_2$ can be put into the 
following simultaneous normal form with respect to a local coordinate system $(x_1,\hdots,x_n)$ in a neighborhod of  $O$:
\begin{equation} \label{eqn:L1L2}
 \begin{array}{l}
  \Lambda_1 = \frac{\partial}{\partial x_1} \wedge \hdots \wedge \frac{\partial}{\partial x_{q_1}}, \\
 \Lambda_2 = \frac{\partial}{\partial x_{q_1+1}} \wedge \hdots \wedge \frac{\partial}{\partial x_{q_1 + q_2}}.
 \end{array}
\end{equation}
\end{prop}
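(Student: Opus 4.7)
My plan is to construct the simultaneous normal form in stages: first put $\Lambda_1$ into its own normal form, then use the vanishing Schouten bracket to extract and normalize a "reduced" Nambu structure on the transverse quotient of $\cF_{\Lambda_1}$, and finally absorb the residual terms by a compatible shear of the longitudinal coordinates.

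I would begin by applying the Nambu normal form to $\Lambda_1$ alone, obtaining local coordinates $(y_1,\dots,y_n)$ near $O$ with $\Lambda_1=\frac{\partial}{\partial y_1}\wedge\cdots\wedge\frac{\partial}{\partial y_{q_1}}$. A further linear change of the remaining coordinates $y_{q_1+1},\dots,y_n$ preserves $\Lambda_1$, and---since $\Lambda_2(O)$ is a nonzero decomposable $q_2$-vector satisfying $\Lambda_1(O)\wedge\Lambda_2(O)\neq 0$---can be arranged so that $\Lambda_2(O)=\frac{\partial}{\partial y_{q_1+1}}\wedge\cdots\wedge\frac{\partial}{\partial y_{q_1+q_2}}$.

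Writing $\Lambda_2=\sum_I a_I\frac{\partial}{\partial y_I}$ and using that the $\frac{\partial}{\partial y_i}$ commute, the graded Leibniz rule for the Schouten bracket yields
\[
[\Lambda_1,\Lambda_2]=\sum_{i=1}^{q_1}(-1)^{q_1-i}\,\frac{\partial}{\partial y_1}\wedge\cdots\wedge\widehat{\frac{\partial}{\partial y_i}}\wedge\cdots\wedge\frac{\partial}{\partial y_{q_1}}\wedge \cL_{\partial/\partial y_i}\Lambda_2.
\]
Setting this to zero and extracting coefficients of basis $(q_1+q_2-1)$-vectors gives: (i) $\partial a_L/\partial y_k=0$ for all $L\subset\{q_1+1,\dots,n\}$ of size $q_2$ and all $k\le q_1$; (ii) certain divergence-type identities on the remaining "mixed" coefficients. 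Hence the horizontal part $\bar\Lambda:=\sum_{L\subset\{q_1+1,\dots,n\}}a_L\,\frac{\partial}{\partial y_L}$ descends to the transverse slice $\{y_1=\cdots=y_{q_1}=0\}$, and Nambu decomposability of $\Lambda_2$ (together with the transversality at $O$) forces $\bar\Lambda$ to be a Nambu structure there, nonzero at $O$. Applying the Nambu normal form to $\bar\Lambda$ yields new coordinates $(z_{q_1+1},\dots,z_n)$ which, extended to $M$ independently of $y_1,\dots,y_{q_1}$, keep $\Lambda_1$ in normal form and satisfy
\[
\Lambda_2=\frac{\partial}{\partial z_{q_1+1}}\wedge\cdots\wedge\frac{\partial}{\partial z_{q_1+q_2}}+R,
\]
where $R$ consists of terms each with at least one $\frac{\partial}{\partial y_i}$-factor, $i\le q_1$.

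The main obstacle is the final step: eliminating $R$ by a change $y_i\mapsto y_i+h_i(y_1,\dots,y_{q_1},z_{q_1+1},\dots,z_n)$ with unit Jacobian in the $y$-block (so $\Lambda_1$ is preserved). Nambu decomposability of $\Lambda_2$ constrains $R$ to a very specific form---each of its monomials is obtained by replacing a single factor $\frac{\partial}{\partial z_{q_1+j}}$ of $\bar\Lambda$'s normal form by a $\frac{\partial}{\partial y_i}$-contribution---and the divergence-type identities (ii) from the bracket calculation supply precisely the integrability conditions needed to solve for the $h_i$'s. An equivalent and perhaps cleaner way to finish is to construct, using the commutativity, a family of Hamiltonian vector fields of $\Lambda_2$ that pairwise commute and also commute with $\frac{\partial}{\partial y_1},\dots,\frac{\partial}{\partial y_{q_1}}$; the classical simultaneous straightening for commuting vector fields then gives the desired coordinates directly, and checking that these commuting vector fields exist is the true geometric content of $[\Lambda_1,\Lambda_2]=0$.
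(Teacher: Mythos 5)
Your construction through the reduction step is essentially sound (one slip: a linear change of the coordinates $y_{q_1+1},\hdots,y_n$ alone cannot arrange $\Lambda_2(O)=\frac{\partial}{\partial y_{q_1+1}}\wedge\hdots\wedge\frac{\partial}{\partial y_{q_1+q_2}}$ when the span of $\Lambda_2(O)$ has components along the first block; but this normalization is not needed, since transversality already gives $\pi_*\Lambda_2(O)\neq 0$ for the projection $\pi$ along the $y$-block). The genuine gap is the final step, and the two claims you lean on there are both false. First, $R$ does not consist of monomials with a single $\frac{\partial}{\partial y_i}$-factor: near $O$ the distribution of $\Lambda_2$ meets $\mathrm{span}(\frac{\partial}{\partial y_1},\hdots,\frac{\partial}{\partial y_{q_1}})$ trivially, so it is spanned by vector fields $V_j=\frac{\partial}{\partial z_{q_1+j}}+\sum_i b_{ij}\frac{\partial}{\partial y_i}$ and $\Lambda_2=V_1\wedge\hdots\wedge V_{q_2}$, whose expansion contains terms with up to $\min(q_1,q_2)$ factors of $y$-type. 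Second, the identities (ii) are only divergence-type constraints and do not make the mixed coefficients independent of $y$: on $\mathbb{R}^3$ with $\Lambda_1=\frac{\partial}{\partial y_1}\wedge\frac{\partial}{\partial y_2}$ and $\Lambda_2=\frac{\partial}{\partial z_3}+y_2\frac{\partial}{\partial y_1}$ one has $[\Lambda_1,\Lambda_2]=0$, yet no Hamiltonian vector field of $\Lambda_2$ commutes with the fixed $\frac{\partial}{\partial y_1},\frac{\partial}{\partial y_2}$. So your ``cleaner'' alternative fails as stated, and the solvability of the shear equations with unit $y$-Jacobian is asserted rather than proved.

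The step can be completed, but by a different argument. The $V_j$ automatically commute: $[V_j,V_k]$ is tangent to the (involutive, near $O$) distribution of $\Lambda_2$ and has no $\frac{\partial}{\partial z}$-component, hence vanishes. Take common first integrals $y'_1,\hdots,y'_{q_1}$ of $V_1,\hdots,V_{q_2}$ with $y'_i=y_i$ on the transversal $\{z_{q_1+1}=\hdots=z_{q_1+q_2}=0\}$. In the coordinates $(y',z)$ each $V_j$ becomes $\frac{\partial}{\partial z_{q_1+j}}$, so $\Lambda_2=\frac{\partial}{\partial z_{q_1+1}}\wedge\hdots\wedge\frac{\partial}{\partial z_{q_1+q_2}}$ exactly, while $\Lambda_1=J\,\frac{\partial}{\partial y'_1}\wedge\hdots\wedge\frac{\partial}{\partial y'_{q_1}}$ with $J=\det(\partial y'_k/\partial y_i)$. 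Only now does the residual bracket information enter: $[\Lambda_1,\Lambda_2]=0$ written in the new coordinates says precisely $\partial J/\partial z_{q_1+j}=0$ for all $j$, and since $J\equiv 1$ on the transversal, $J\equiv 1$ everywhere; so the bracket is used after the straightening, not as an integrability condition for the $h_i$. (Also, the involutivity of the projected distribution of $\bar\Lambda$, which you assert via ``decomposability plus transversality'', needs the lifting argument through the isomorphism $\pi_*$ restricted to the distribution of $\Lambda_2$.) Note that the paper takes a different route altogether: it first settles $q_1+q_2=n$ by straightening the two transverse foliations simultaneously and using the bracket to kill the cross-variable dependence of the two coefficient functions, and reduces the general case to a parametrized version of this by proving that $\Lambda_1\wedge\Lambda_2$ is itself a Nambu structure.
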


\begin{proof}
The above proposition was mentioned without proof in \cite{DufourZung-PoissonBook}, so for the completeness of
exposition let us present here a proof. 

Let us first consider the case $n = q_1 + q_2$.  In this case, we have two local transverse foliations near $O$, generated by
$\Lambda_1$ and $\Lambda_2$ respectively. We can find a coordinate system $(y_1,\hdots,y_{q_1},z_1,\hdots,z_{q_2})$
near $O$ such that the foliation generated by $\Lambda_1$ is $\{z_1= const.,\hdots,z_{q_2} = const.\},$ and 
 the foliation generated by $\Lambda_1$ is $\{y_1= const.,\hdots,y_{q_1} = const.\}.$ In other words,
$\Lambda_1 = f_1 \frac{\partial}{\partial y_1} \wedge \hdots \wedge  \frac{\partial}{\partial y_{q_1}}$ and
$\Lambda_2 = f_2 \frac{\partial}{\partial z_1} \wedge \hdots \wedge  \frac{\partial}{\partial z_{q_2}},$ where
$f_1$ and $f_2$ are some functions. The equality $[\Lambda_1,\Lambda_2] = 0$ implies that $f_1$ (resp. $f_2$)
does not depend on the variables $z_1,\hdots, z_{q_2}$ (resp. $y_1,\hdots, y_{q_1}$). One can then find functions
$x_1,\hdots,x_{q_1}$ (resp. $x_{q_1+1},\hdots, x_{n}$) which depend only on the coordinates 
$y_1,\hdots,y_{q_1}$ (resp. $z_{1},\hdots, z_{q_2}$), such that $\Lambda_1$ and $\Lambda_2$ have the canonical
form \eqref{eqn:L1L2} in the new coordinate system $(x_1,\hdots, x_{n}).$

The case when $n > q_1 + q_2$  can be reduced to a parametrized version of the case with the dimension equal to
$q_1 + q_2.$ The main point is to  prove that $\Lambda_1 \wedge \Lambda_2$ is a Nambu structure. Locally we can write
\begin{equation}
 \begin{array}{l}
  \Lambda_1 = X_1 \wedge \hdots \wedge X_{q_1}, \\
 \Lambda_2 = Y_1 \wedge \hdots \wedge Y_{q_2},
 \end{array}
\end{equation}
where the vector fields $X_1,\hdots,X_{q_1},Y_1,\hdots,Y_{q_2}$ are linearly independent can can be completed
by vector fields $Z_1,\hdots,Z_{q_3}$, where $q_3 = n - q_1 - q_2,$ to become a basis field for the tangent bundle of $M$  near $O$.
The fact that $\Lambda_1$ is a Nambu structure means that $X_1,\hdots,X_{q_1}$ satisfy the Frobenius integrability condition,
i.e. $[X_i,X_j] (x)$ lies in the linear span of $X_1(x),\hdots, X_{q_1}(x)$ for any $x$ near $O$. The same holds for the
vector fields $Y_1,\hdots,Y_{q_2}.$ The equality
\begin{multline}
 0 = [\Lambda_1,\Lambda_2] = \sum_{i,j} ((-1)^{i+j} [X_i,Y_j]  \wedge X_1 \wedge \hdots \wedge X_{i-1} \wedge X_{i+1}\wedge
\hdots \wedge X_{q_1} \\
\wedge Y_{1} \wedge \hdots \wedge Y_{j-1} \wedge Y_{j+1} \wedge \hdots \wedge Y_{q_2})
\end{multline}
implies that $[X_i,Y_j]$ must also lie in the span of $X_1,\hdots,X_{q_1}, Y_1,\hdots, Y_{q_2},$ because it cannot contain a component
of the type $Z_k$ in its decomposition in the basis $(X_i,Y_j,Z_k).$ It means that $X_1,\hdots,X_{q_1},Y_1,\hdots,Y_{q_2}$ span an 
integrable distribution, which is tangent to a $(q_1+q_2)$-dimensional foliation, and $\Lambda_1 \wedge \Lambda_2$ is a contravariant
volume form on the leaves of this foliation,. Thus $\Lambda_1 \wedge \Lambda_2$ is a Nambu structure.
\end{proof}

\begin{prop} \label{prop:CommutingNambu2}
 Let $\Lambda_1, \hdots, \Lambda_s$ be pairwise commuting 
Nambu tensors of orders $q_1, \hdots, q_s$ respectively on a manifold $M$ of dimension $n$, with $q_1 + \hdots + q_s \leq n.$ Suppose that
$\Lambda_1 (O) \wedge \hdots \wedge  \Lambda_s (O) \neq 0$ at a point $O \in M.$ Then $\Lambda_1, \hdots, \Lambda_s$ can be put into the 
following simultaneous normal form with respect to a local coordinate system $(x_1,\hdots,x_n)$ in a neighborhod of  $O$:
\begin{equation}
 \begin{array}{l}
  \Lambda_1 = \frac{\partial}{\partial x_1} \wedge \hdots \wedge \frac{\partial}{\partial x_{q_1}}, \\  
\Lambda_2 = \frac{\partial}{\partial x_{q_1+1}} \wedge \hdots \wedge \frac{\partial}{\partial x_{q_1 + q_2}}, \\
\hdots \\
\Lambda_s = \frac{\partial}{\partial x_{q_1+ \hdots + q_{s-1} + 1}} \wedge \hdots \wedge \frac{\partial}{\partial x_{q_1 + \hdots + q_s}}. \\
 \end{array}
\end{equation}
\end{prop}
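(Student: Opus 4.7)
The plan is induction on $s$, with base case $s = 2$ supplied by Proposition \ref{prop:CommutingNambu1}. For the inductive step, set $Q_k := q_1 + \cdots + q_k$ and $\Pi := \Lambda_1 \wedge \cdots \wedge \Lambda_{s-1}$. Iterating the argument at the end of the proof of Proposition \ref{prop:CommutingNambu1} (two commuting Nambu tensors with nonvanishing wedge have a Nambu wedge product) one deduces, step by step, that $\Pi$ is a Nambu tensor of order $Q_{s-1}$ in a neighborhood of $O$. Moreover, the Leibniz rule for the Schouten bracket, combined with the pairwise commutations $[\Lambda_i,\Lambda_s]=0$ for $i<s$, gives $[\Pi,\Lambda_s]=0$. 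Since $\Pi(O)\wedge\Lambda_s(O)\neq 0$, Proposition \ref{prop:CommutingNambu1} applied to the pair $(\Pi,\Lambda_s)$ yields coordinates $(u_1,\ldots,u_n)$ near $O$ such that
\begin{equation*}
\Pi \;=\; \frac{\partial}{\partial u_1}\wedge\cdots\wedge\frac{\partial}{\partial u_{Q_{s-1}}}, \qquad \Lambda_s \;=\; \frac{\partial}{\partial u_{Q_{s-1}+1}}\wedge\cdots\wedge\frac{\partial}{\partial u_{Q_s}}.
\end{equation*}

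Next I claim that in these coordinates each $\Lambda_i$ with $i<s$ involves only the derivations $\partial/\partial u_1,\ldots,\partial/\partial u_{Q_{s-1}}$, and that its coefficients are independent of $u_{Q_{s-1}+1},\ldots,u_{Q_s}$. The first assertion holds because $\Lambda_i$ is a wedge factor of $\Pi$: any vector field $X$ in a local decomposition of $\Lambda_i$ satisfies $X\wedge\Pi=0$ by antisymmetry, hence $X$ is tangent to the foliation of $\Pi$, which in these coordinates is $\mathrm{span}(\partial/\partial u_1,\ldots,\partial/\partial u_{Q_{s-1}})$. The second assertion is obtained by expanding $[\Lambda_i,\Lambda_s]=0$ via the Leibniz rule: the expansion is a sum, indexed by $j\in\{Q_{s-1}+1,\ldots,Q_s\}$, of terms of the form $\pm(\cL_{\partial/\partial u_j}\Lambda_i)\wedge(\text{the factor of }\Lambda_s\text{ with }\partial/\partial u_j\text{ deleted})$. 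These terms use pairwise distinct tuples of coordinate directions (each misses a different $\partial/\partial u_j$), so they are linearly independent, and the vanishing of the sum forces $\cL_{\partial/\partial u_j}\Lambda_i=0$ for every such $j$.

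It then remains to apply the inductive hypothesis to the submanifold $N := \{u_{Q_{s-1}+1}=\cdots=u_{Q_s}=0\}$, of dimension $n-q_s\geq Q_{s-1}$. By the preceding paragraph the restrictions of $\Lambda_1,\ldots,\Lambda_{s-1}$ to $N$ are pairwise commuting Nambu tensors of orders $q_1,\ldots,q_{s-1}$ whose wedge is nonzero at $O$, so the induction provides coordinates $y_1,\ldots,y_{n-q_s}$ on $N$ simultaneously normalizing them. Extend each $y_j$ to a neighborhood of $O$ in $M$ by requiring it to be independent of $u_{Q_{s-1}+1},\ldots,u_{Q_s}$, and complement with those $q_s$ variables to obtain a new coordinate system. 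The Jacobian of the change is block-triangular with the identity in the $u_{Q_{s-1}+1},\ldots,u_{Q_s}$ block, so $\Lambda_s$ retains its normal form; and since the coefficients of $\Lambda_1,\ldots,\Lambda_{s-1}$ are independent of those $u$-variables, their normal forms on $N$ extend verbatim to $M$. The main obstacle is the coefficient-independence claim of the second paragraph; once it is in hand, the descent to $N$ and the extension back to $M$ are routine and formalize the parametrized-reduction device already invoked at the end of the proof of Proposition \ref{prop:CommutingNambu1}.
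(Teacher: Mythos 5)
Your proof is correct and takes essentially the same route as the paper: induct on $s$, wedge all but one of the tensors into a single Nambu tensor, apply Proposition \ref{prop:CommutingNambu1} to the resulting pair, and then normalize the remaining $(s-1)$-tuple in a parametrized way (the paper singles out $\Lambda_1$ and wedges $\Lambda_2\wedge\cdots\wedge\Lambda_s$, which is just a relabeling of your choice). The details you supply --- that $\Pi$ is Nambu, that $[\Pi,\Lambda_s]=0$ by the Leibniz rule, and that the coefficients of each $\Lambda_i$ are independent of the transverse variables so the problem descends to the submanifold $N$ --- are precisely what the paper compresses into the phrase ``a parametrized version'' of the smaller problem.
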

 
\begin{proof}
By induction. Apply Proposition \ref{prop:CommutingNambu1} to 
$\Lambda_1$ and $\Pi_2 = \Lambda_2 \wedge \hdots \wedge \Lambda_s,$
we get a coordinate system in which $\Lambda_1$ and $\Pi_2$ are normalized. The problem is then reduced
to (a parametrized version of) the problem of normalization of the $(s-1)$-tuple of Nambu structures
$\Lambda_2,\hdots, \Lambda_s.$
\end{proof}

\begin{example}
 Consider a non-identity 
linear automorphism $\phi$ from a torus $\bbT^n$ to itself, and denote by $M$ its suspension: $M$ is a torus fibration over the circle
$\bbS^1,$ with a ``horizontal'' vector field $X$ which is a lifting of the standard constant vector field on $\bbS^1$ such that the Poincaré
map of $X$ on a fiber of $M$ is isomorphic to $\phi.$ Denote by $\Lambda$ the standard contravariant volume form on the torus fibers
of $M$. Then $\Lambda$ is a Nambu structure on $M$ which is preserved by $X$. We can also view $X$ as a Nambu structure of order 1
on $M$. Then $\Lambda$ and $X$ are two transverse commuting Nambu structures on $M$. Notice that the holonomy of the foliation 
generated  by $X$ near the closed orbits of $X$ are not trivial on the tori (i.e. the leaves of $\Lambda$), i.e. we have here two transverse
foliations which are generated by two commuting Nambu structures, but which do not satisfy 
Movshev's holonomy condition \cite{Movshev-YM2009}. Nevertheless, this example is of almost direct product type, and it is reasonable
to consider almost direct products of manifolds as examples of commuting foliations. 
\end{example}

\subsection{Reduction of Nambu structures}

Before treating the case $q_1 + q_2 > n,$ let us make a digression and discuss briefly about the reduction of Nambu structure, because
we will reduce the case with $q_1 + q_2 > n,$ to the case with $q_1 + q_2 = n.$

A vector field $X$ is called a {\bf Nambu vector field} with respect to a Nambu structure $\Lambda$ if $X$ preserves $\Lambda,$ i.e.
$\cL_X \Lambda = [X,\Lambda] = 0,$ where $\cL$ denotes the Lie derivation, and the bracket is the Schouten bracket. $X$ is called a
{\bf Hamiltonian vector field} with respect to $\Lambda$ if there are $q-1$ functions $f_1,\hdots, f_{q-1}$ such that
\begin{equation}
 X = (df_1 \wedge \hdots \wedge df_{q-1}) \lrcorner \Lambda.
\end{equation}
Any Hamiltonian vector field is a Nambu vector field which is tangent to the foliation generated by $\Lambda.$ Conversely, a Nambu vector field
which is tangent to the foliation of the Nambu tensor $\Lambda$ is a locally Hamiltonian vector field near each non-singular point of $\Lambda.$
More generally, if $f_1,\hdots,f_{q-k}$ are $q-k$ functions, where $1 \leq k < q,$ and $\Lambda$ is a Nambu tensor of order $q,$ then the $k$-vector field
\begin{equation}
\Pi_{f_1,\hdots,f_{q-k}} = (df_1 \wedge \hdots \wedge df_k) \lrcorner \Lambda 
\end{equation}
is a Nambu structure of order $k$ which will be called a {\bf Hamiltonian Nambu structure} with respect to $\Lambda:$ the foliation of 
$\Pi_{f_1,\hdots,f_{q-k}}$ is tangent to the foliation of $\Lambda,$ and $\Pi_{f_1,\hdots,f_{q-k}}$ ``preserves'' $\Lambda$ in the sense that
\begin{equation}
 [\Pi_{f_1,\hdots,f_{q-k}},\Lambda] = 0.
\end{equation}

Assume $\Lambda$ is a given Nambu structure of order $q$, and $\Pi$ is a regular Nambu structure of order $k$ $(1 \leq k < q)$ on a manifold $M$
of dimension $n$, with the following properties: \\
i) The leaf space $M/\cF^\Pi$ of the regular foliation $\cF^\Pi$ of $\Pi$  in $M$ is a Hausdorff
$(n-k)$-dimensional manifold. \\
ii) The foliation of $\Pi$ is tangent to the foliation of $\Lambda,$ and $[\Pi,\Lambda] = 0.$
Then there is a unique Nambu structure $\Theta$ of order $q-k$ on the quotient manifold $M/\cF^\Pi$ (the leaf space of $\cF^\Pi$), which is
the {\bf reduction of $\Lambda$ by $\Pi$} in the following sense: 
locally near each point $z \in M$ there is a  coordinate system $(x_1,\hdots, x_n)$ in which
\begin{equation}
\Pi = \frac{\partial}{\partial x_1} \wedge \hdots \wedge  \frac{\partial}{\partial x_k},
\end{equation}
the variables $(x_{k+1},\hdots,x_n)$ are local coordinate system on the quotient manifold $M/\cF^\Pi,$ 
\begin{equation}
\Lambda =  \Pi \wedge \Theta
\end{equation}
 and the expression of $\Theta$ involves only the variables $(x_{k+1},\hdots,x_n).$ The above reduction process is an imitation of the
reduction of Poisson structures. It can be done locally, i.e. we can talk about the {\bf local reduction} of $\Lambda$ by $\Pi$
near any given regular point of $\Pi$ (without the need of the assumption that $\Pi$ is
globally regular).

\subsection{The case $q_1 + q_2 > n$}

When $q_1 + q_2 \geq n+2$ then we always have $[\Lambda_1,\Lambda_2] = 0$ for any $q_1$-vector field $\Lambda_1$ and
$q_2$-vector field $\Lambda_2$, and we have to change the definition of commutativity in this case in order for it to be
meaningful. When $q_1 + q_2 = n+1$ then the condition is non-trivial, but not sufficient to imply that $\Lambda_1$ and
$\Lambda_2$ can be put into a constant form simultaneously near a non-singular point. The best that we can have
when $q_1 + q_2 = n+1$ under the condition $[\Lambda_1,\Lambda_2] = 0$ is the following:

\begin{prop} \label{prop:CommutingNambu3}
 Let $\Lambda_1$ and $\Lambda_2$ be two 
Nambu tensors of orders $q_1$ and $q_2$ respectively on a manifold $M$ of dimension $n$, such that $q_1 + q_2 =  n+1$, 
  $[\Lambda_1,\Lambda_2] = 0$ and $(\Lambda_1\lrcorner\Omega)(O)\wedge(\Lambda_2\lrcorner\Omega)(O) \neq 0$ at a point $O \in M$, where $\Omega$ is a local volume form. 
Then in a neighborhood of $O$ there exists a vector field $X$ which is locally Hamiltonian with respect to both
$\Lambda_1$ and $\Lambda_2$. Conversely, if $\Lambda_1$ and $\Lambda_2$ are two 
Nambu tensors of orders $q_1$ and $q_2$ respectively on a manifold $M$ of dimension $n = q_1 + q_2 - 1$,  which admit
a common Hamiltonian vector field $X,$ then $[\Lambda_1,\Lambda_2] = 0.$
\end{prop}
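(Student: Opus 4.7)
The plan is to construct the common Hamiltonian vector field in the forward direction explicitly in local coordinates adapted to $\Lambda_1$, and then to deduce the converse by comparing any putative common Hamiltonian with this explicit choice. Since $\Lambda_1$ is regular at $O$, pick coordinates $(u_1,\hdots,u_{q_1},v_1,\hdots,v_{q_2-1})$ centered at $O$ with $\Lambda_1 = \frac{\partial}{\partial u_1} \wedge \hdots \wedge \frac{\partial}{\partial u_{q_1}}$, and set
\begin{equation*}
Y := (dv_1 \wedge \hdots \wedge dv_{q_2-1}) \lrcorner \Lambda_2.
\end{equation*}
By construction $Y$ is a Hamiltonian vector field of $\Lambda_2$, hence locally Hamiltonian for $\Lambda_2$ and in particular $\cL_Y \Lambda_2 = 0$. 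Expanding the contraction, $Y = \sum_{k=1}^{q_1} \pm a_k \frac{\partial}{\partial u_k}$ where $a_k$ is the coefficient of $\frac{\partial}{\partial u_k}\wedge \frac{\partial}{\partial v_1}\wedge \hdots \wedge \frac{\partial}{\partial v_{q_2-1}}$ in $\Lambda_2$; thus $Y$ is tangent to $\cF_{\Lambda_1}$, and the hypothesis $(\Lambda_1\lrcorner\Omega)(O)\wedge(\Lambda_2\lrcorner\Omega)(O)\neq 0$ is exactly $Y(O)\neq 0$.

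It remains to show $\cL_Y \Lambda_1 = 0$. Since $Y$ lies in the $u$-plane, a direct computation yields $\cL_Y \Lambda_1 = -(\mathrm{div}_u Y)\,\Lambda_1$ where $\mathrm{div}_u Y := \sum_k \pm\partial a_k/\partial u_k$. On the other hand, the Leibniz rule for the Schouten bracket makes $[\Lambda_1,\Lambda_2]$ an $n$-vector field, and because $n=q_1+q_2-1$ the only component of $\Lambda_2$ surviving the wedge with any $\Lambda_1^{(i)} := \frac{\partial}{\partial u_1}\wedge \hdots \wedge \widehat{\frac{\partial}{\partial u_i}}\wedge \hdots \wedge \frac{\partial}{\partial u_{q_1}}$ is precisely $\frac{\partial}{\partial u_i}\wedge \frac{\partial}{\partial v_1}\wedge \hdots \wedge \frac{\partial}{\partial v_{q_2-1}}$. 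A short bookkeeping gives $[\Lambda_1,\Lambda_2] = \pm(\mathrm{div}_u Y)\,\frac{\partial}{\partial u_1}\wedge \hdots \wedge \frac{\partial}{\partial v_{q_2-1}}$, so $[\Lambda_1,\Lambda_2]=0$ forces $\mathrm{div}_u Y=0$ and hence $\cL_Y\Lambda_1=0$. Together with tangency, this exhibits $Y$ as a locally Hamiltonian vector field for both Nambu structures.

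For the converse, let $X$ be the given common Hamiltonian vector field and work near a point $O$ with $X(O) \neq 0$. Tangency of $X$ to both foliations forces $X$ to lie in the one-dimensional distribution $T\cF_{\Lambda_1}\cap T\cF_{\Lambda_2}$, which is also where $Y$ lies; hence $X = \rho Y$ for some function $\rho$ with $\rho(O) \neq 0$. Since $Y$ is Hamiltonian for $\Lambda_2$, the identity $\cL_{\rho Y}\Lambda_2 = -Y(\rho)\Lambda_2$ combined with $\cL_X \Lambda_2 = 0$ gives $Y(\rho)=0$; similarly $\cL_{\rho Y}\Lambda_1 = \rho h_1 \Lambda_1$ where $h_1$ is given by the CIT relation $\cL_Y \Lambda_1 = h_1 \Lambda_1$ from Section~2, and $\cL_X \Lambda_1 = 0$ together with $\rho \neq 0$ force $h_1 = 0$. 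By the coordinate computation of the previous paragraph, this is equivalent to $[\Lambda_1,\Lambda_2]=0$ on a neighborhood of $O$.

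The main technical obstacle is the explicit Schouten-bracket expansion and the attendant multi-index bookkeeping in the second paragraph. The dimensional identity $n=q_1+q_2-1$ is what makes everything work cleanly: it forces $[\Lambda_1,\Lambda_2]$ to be a top multivector whose unique scalar coefficient is read off from precisely the single multi-index component of $\Lambda_2$ used to define $Y$, so the same vector field that serves as the Hamiltonian of the Casimir functions $v_1,\hdots,v_{q_2-1}$ of $\Lambda_1$ (via $\Lambda_2$) automatically witnesses both commutativity and simultaneous Hamiltonian-ness.
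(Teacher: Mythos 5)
Your forward direction is correct, and it takes a genuinely different route from the paper's. The paper picks an arbitrary local generator $\tilde X$ of the line field $T\cF_1\cap T\cF_2$, rescales it by solving the ODE $\tilde X(f)=af$ (where $[\tilde X,\Lambda_1]=a\Lambda_1$) so that the rescaled field preserves $\Lambda_1$, writes $\Lambda_i=X\wedge\Pi_i$, and then gets $[X,\Lambda_2]=0$ invariantly from $0=[\Lambda_1,\Lambda_2]=\pm[X,\Lambda_2]\wedge\Pi_1$. You instead produce the common vector field directly as $Y=(dv_1\wedge\hdots\wedge dv_{q_2-1})\lrcorner\Lambda_2$, a Hamiltonian vector field of $\Lambda_2$ whose Hamiltonians are the transverse coordinates of $\cF_1$, and extract the single top-degree coefficient of $[\Lambda_1,\Lambda_2]$ in coordinates. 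The bookkeeping does work out: the sign in $Y=\pm\sum_k a_k\,\partial/\partial u_k$ is independent of $k$, and $[\Lambda_1,\Lambda_2]=\bigl(\sum_k \partial a_k/\partial u_k\bigr)\,\partial_{u_1}\wedge\hdots\wedge\partial_{v_{q_2-1}}$ with uniform signs, so $[\Lambda_1,\Lambda_2]=0$ is indeed equivalent to $\cL_Y\Lambda_1=0$; your observation that the hypothesis on $(\Lambda_1\lrcorner\Omega)\wedge(\Lambda_2\lrcorner\Omega)$ is exactly $Y(O)\neq 0$ is also right. This trades the paper's ODE and invariant Leibniz argument for an explicit computation, which is a legitimate and arguably more concrete proof of the first half.

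The converse, however, has a gap: you write $X=\rho Y$ with $\rho(O)\neq 0$, which presupposes that $T\cF_1\cap T\cF_2$ is one-dimensional and that $Y(O)\neq 0$ --- i.e. precisely the transversality hypothesis $(\Lambda_1\lrcorner\Omega)(O)\wedge(\Lambda_2\lrcorner\Omega)(O)\neq 0$ of the first half, which is \emph{not} assumed in the converse (its hypotheses are only $n=q_1+q_2-1$ and a common Hamiltonian vector field). At a point where $X(O)\neq 0$ but the two leaves are tangent (say $T_O\cF_1=T_O\cF_2$), $Y$ may vanish identically near $O$ while $X$ does not, and the step ``$X=\rho Y$, hence $\mathrm{div}_u Y=0$'' collapses. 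The conclusion is still true near any point with $X(O)\neq 0$, and the clean argument is the one implicit in the paper's own decomposition: Hamiltonian-ness gives $X\wedge\Lambda_i=0$ and $\cL_X\Lambda_i=0$, so after straightening $X$ one can write $\Lambda_i=X\wedge\Pi_i$ with $\Pi_i=\pm\, dx_1\lrcorner\Lambda_i$ invariant under $X$, and the Leibniz rule for the Schouten bracket kills $[X\wedge\Pi_1,X\wedge\Pi_2]$ term by term; equivalently, in your coordinates one may further arrange $X=\partial/\partial u_1$ with $\Lambda_1$ still standard, and then tangency $X\wedge\Lambda_2=0$ forces $a_i=0$ for $i\geq 2$ while $\cL_X\Lambda_2=0$ gives $\partial a_1/\partial u_1=0$, so $\mathrm{div}_u Y=0$ with no transversality needed. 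Finally, be aware that your argument only yields $[\Lambda_1,\Lambda_2]=0$ near points where $X\neq 0$; as literally stated the converse claims vanishing of the bracket outright (and is even false if one allows $X\equiv 0$, which is formally Hamiltonian with constant Hamiltonians), so you should say explicitly that you prove it on a neighborhood of $\{X\neq 0\}$, a looseness the paper shares since it offers no proof of the converse at all.
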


\begin{proof}
 Denote by $\cF_1$ and $\cF_2$ the foliations of $\Lambda_1$ and $\Lambda_2$ respectively. Then the intersection of $\cF_1$
with $\cF_2$ near $O$ is a regular 1-dimensional foliation. Let $\tilde X$ be a local vector field which is tangent to this intersection
foliation, i.e. $\tilde X$ is tangent to both $\Lambda_1$ and $\Lambda_2:$ $\tilde X \wedge \Lambda_1 = \tilde X \wedge \Lambda_2 = 0.$ Since
$\tilde X$ is tangent to $\Lambda_1,$ we have $[\tilde X,\Lambda_1] = a \Lambda_1$ for some function $a.$ By putting $X = f \tilde X,$ where
$f$ is a local solution of the ordinary differential equation $\tilde X(f) = af$, we get $[X,\Lambda] = 0,$ i.e. $X$ is a locally
Hamiltonian vector field of $\Lambda_1.$ Locally near $O$ we can write $\Lambda_1 = X \wedge \Pi_1,$ and also
$\Lambda_2 = X \wedge \Pi_2,$ where $\Pi_1$ and $\Pi_2$ are Nambu structures and $\Pi_1$
is invariant with respect to $X.$ The equality $0 = [\Lambda_1,\Lambda_2] = \pm [X,\Lambda_2] \wedge \Pi_1$
implies that $ [X,\Lambda_2] = 0,$ i.e. $X$ is also a local Hamiltonian vector field with respect to $\Lambda_2.$
\end{proof}

Remark that, in the above proposition, even though we can choose $X,\Pi_1$ and $\Pi_2$ such that $\Lambda_1 = X\wedge \Pi_1,$
$\Lambda_2 = X\wedge \Pi_2,$ and $\Pi_1$ and $\Pi_2$ are two Nambu tensors invariant with respect to $X$, we cannot
arrange so that $[\Pi_1,\Pi_2] = 0$ in general. A  way to define commutativity of two Nambu structures 
whose total rank is greater than the dimension of the
manifold is as follows:

\begin{defn} \label{defn:CommutingNambu2}
Let $\Lambda_1$ and $\Lambda_2$ be two Nambu structures of orders $q_1$ and $q_2$ respectively on a manifold $M$ of dimension
$n$, such that $q_1 + q_2 - n = k > 0.$ Then we say that $\Lambda_1$ commutes with $\Lambda_2$ if in a neighborhood of any point 
$O \in M$ such that the foliations generated by $\Lambda_1$ and $\Lambda_2$ are transverse to each other near $O$, there is a
local coordinate system $(x_1,\hdots,x_n)$ such that
\begin{equation}
\begin{array}{l}
 \Lambda_1 = \frac{\partial}{\partial x_1} \wedge \hdots \wedge \frac{\partial}{\partial x_{q_1}}, \\  
\Lambda_1 = \frac{\partial}{\partial x_1} \wedge \hdots \wedge \frac{\partial}{\partial x_{k}}
\wedge  \frac{\partial}{\partial x_{q_1+1}} \wedge \hdots \wedge \frac{\partial}{\partial x_{n}}.
\end{array}
\end{equation}
\end{defn}

Another equivalent definition of commutativity of Nambu structures 
in the case $q_1 + q_2 > n$ is given by the following
proposition and by induction on $q_1 + q_2 - n$:

\begin{prop} \label{prop:CommutingNambu4}
 $\Lambda_1$ commutes with $\Lambda_2$ if and only if near any point $O$ such that $\Lambda_1$ and  $\Lambda_2$ are transverse at $O$,
there is a vector field $X$ such that $X(O) \neq 0,$ $X$ is Hamiltonian with respect to both $\Lambda_1$ and $\Lambda_2,$ and the
local reductions of $\Lambda_1$ and $\Lambda_2$ with respect to $X$ commute with each other. 
\end{prop}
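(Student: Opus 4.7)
The plan is to prove the equivalence by induction on the excess $k = q_1 + q_2 - n > 0$, using in both directions the local reduction by $X$ described in Section 3.2. The point is that if $X$ is a common nowhere-zero Hamiltonian vector field, then $X$ is tangent to both foliations (Hamiltonian vector fields are automatically tangent to the foliation of their Nambu tensor) and we can factor $\Lambda_i = X \wedge \Pi_i$ with $\Pi_i$ invariant under $X$, so that $\Pi_i$ descends to the reduction $\Lambda_i^{\mathrm{red}}$ on the quotient $M/\cF^X$ of dimension $n-1$. The orders $q_1-1, q_2-1$ of the reductions then have excess $k-1$, which supplies the inductive parameter; the base case $k=1$ drops us into the regime $q_1' + q_2' = n'$ covered by Proposition \ref{prop:CommutingNambu1}.

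For the forward direction, assume $\Lambda_1,\Lambda_2$ are in the simultaneous normal form of Definition \ref{defn:CommutingNambu2}, so that $\partial/\partial x_1,\dots,\partial/\partial x_k$ lie in both foliations. Take $X = \partial/\partial x_1$; then $X$ is Hamiltonian for both $\Lambda_i$, as witnessed directly by
\[
X = \pm (dx_2 \wedge \cdots \wedge dx_{q_1}) \lrcorner \Lambda_1 = \pm (dx_2 \wedge \cdots \wedge dx_k \wedge dx_{q_1+1} \wedge \cdots \wedge dx_n) \lrcorner \Lambda_2.
\]
Reading off the reductions in the coordinates $(x_2,\ldots,x_n)$ on the quotient, one obtains a pair in exactly the simultaneous normal form of Definition \ref{defn:CommutingNambu2} with excess $k-1$ (or of Proposition \ref{prop:CommutingNambu1} when $k=1$, where $[\Lambda_1^{\mathrm{red}},\Lambda_2^{\mathrm{red}}]=0$), so the reductions commute.

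For the converse, suppose $X$ as in the statement exists. Straighten $X$ to $\partial/\partial x_1$ in some local coordinates on $M$. Because $X$ is Hamiltonian for each $\Lambda_i$, one has $[X,\Lambda_i]=0$ and $X\wedge\Lambda_i=0$, so $\Lambda_i = \partial/\partial x_1 \wedge \tilde\Pi_i$ with $\tilde\Pi_i$ independent of $x_1$; the $\tilde\Pi_i$ are literally the pullbacks of $\Lambda_i^{\mathrm{red}}$ from the quotient parametrized by $(x_2,\dots,x_n)$. By the inductive hypothesis applied on the quotient (using Proposition \ref{prop:CommutingNambu1} when $k=1$), the reductions admit a simultaneous normal form in coordinates $(x_2,\dots,x_n)$; adjoining $x_1$ produces the normal form of Definition \ref{defn:CommutingNambu2} on $M$ with overlap block of size exactly $k$.

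The main obstacle I anticipate is book-keeping around the reduction step: verifying that the Hamiltonian-plus-tangent property of $X$ forces the clean factorization $\Lambda_i = X \wedge \tilde\Pi_i$ with $X$-invariant $\tilde\Pi_i$ that genuinely represents the abstract reduction of Section 3.2, and that, conversely, the coordinate $\partial/\partial x_1$ extracted from the normal form is Hamiltonian for both $\Lambda_i$ in the proper sense. A secondary point to handle carefully is the transversality hypothesis in Definition \ref{defn:CommutingNambu2}: it must be checked that the intersection foliation has dimension $k$ at $O$ (so that a suitable $X$ exists in the forward direction, and that the reductions are transverse at the image of $O$ in the reverse direction). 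Once these local identifications are set up, the induction runs almost tautologically.
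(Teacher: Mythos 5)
Your argument is correct and is exactly the route the paper intends: the paper offers no details beyond ``straightforward and by induction on $q_1+q_2-n$'', and your proof carries out precisely that induction via the local reduction by a common Hamiltonian vector field, with the base case handled by Proposition \ref{prop:CommutingNambu1}. The two checks you flag (the clean factorization $\Lambda_i = X\wedge\tilde\Pi_i$ with $X$-invariant $\tilde\Pi_i$, and the descent of transversality to the quotient) are indeed the only points needing verification, and both go through as you indicate.
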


The proof is straightforward and by induction on $q_1 + q_2 -n.$

In Definition \ref{defn:CommutingNambu1} for the case with $q_1 + q_2 \leq n,$ we assumed that $\Lambda_1 \wedge \Lambda_2 \neq 0.$
The case when $\Lambda_1 \wedge \Lambda_2$ is identically zero is a degenerate case, and in that case the definition of commutativity 
has to be changed as follows to make sense:

\begin{defn} Let $\Lambda_1$ and $\Lambda_2$ be two Nambu structures of order $q_1$ and $q_2$ respectively on a manifold $M$, 
and $k> 0$ is a constant, such that  $\dim (T_z \cF_1 \cap T_z \cF_2) = k$ for almost every $z \in M,$ where $\cF_1$ and $\cF_2$ denote
the foliations of $\Lambda_1$ and $\Lambda_2$ respectively. Then we will say that $\Lambda_1$ commutes with $\Lambda_2$
if near any point $z \in M$ such that $\dim (T_z \cF_1 \cap T_z \cF_2) = k$ there is a local coordinate system 
$(x_1,\hdots,x_n)$ such that
\begin{equation}
\begin{array}{l}
 \Lambda_1 = \frac{\partial}{\partial x_1} \wedge \hdots \wedge \frac{\partial}{\partial x_{q_1}}, \\  
\Lambda_1 = \frac{\partial}{\partial x_1} \wedge \hdots \wedge \frac{\partial}{\partial x_{k}}
\wedge  \frac{\partial}{\partial x_{q_1+1}} \wedge \hdots \wedge \frac{\partial}{\partial x_{q_1 + q_2 -k}}.
\end{array}
\end{equation}
\end{defn}

\begin{example}
Given an action of a Lie algebra $\mathfrak{g}$ on a manifold $M$, i.e. a Lie morphism $\mathfrak{g} \to \cX(M)$ from $\mathfrak g$
to the Lie algebras of vector fields on $M$, such that its general orbits have dimension equal to $q$, we can construct
a family of Nambu structure as follows: for each element $\xi \in \wedge^q \mathfrak{g},$ denote by $\Lambda_\xi$ the image of 
$\xi$ via the natural extension  $\wedge^q \mathfrak{g} \to \wedge^q \cX(M)$ of the map $\mathfrak{g} \to \cX(M).$ Then 
$\Lambda_\xi$ is a Nambu structure of order $q$ on $M$ for any $\xi,$ and is $\xi$ is chosen well enough then almost all the regular
orbits of the action of $\fg$ on $M$ are also the regular leaves of the foliation of $\Lambda_\xi.$ Now if there are two commuting
actions of two Lie algebras $\fg_1$ and $\fg_2$ on $M$, and two elements $\xi_1 \in \wedge^{q_1} \fg_1$ and
$\xi_2 \in \wedge^{q_2} \fg_2$, then the two associated Nambu structures $\Lambda_{\xi_1}$ and $\Lambda_{\xi_2}$ will commute
with each other. We will leave the verification of this fact as a simple exercise to the reader.
\end{example}

\section{Almost direct products}

The almost direct product example mentioned in the introduction of this note is in fact a general consruction of foliations which
are transverse to each other and have complementary dimensions. More precisely, we have the following simple proposition:

\begin{prop} \label{prop:AlmostDirectProduct}
Let $\cF_1$ and $\cF_2$ be two regular foliations on a connected compact manifold $M$, such that $T_xM = T_x\cF_1 \oplus T_x \cF_2$ for any 
$x \in M.$ Then the triple $(M,\cF_1,\cF_2)$ is isomorphic to an almost direct product model
\begin{equation}
(F_1 \times F_2) / G,
\end{equation}
where $F_1$ and $F_2$ are two connected manifolds (which are not necessarily compact), $G$ is a discrete group which acts  
on the product $F_1 \times F_2$ freely and diagonally. 
\end{prop}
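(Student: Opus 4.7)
The plan is to pass to the universal cover $\pi\colon\tilde M\to M$, show that the lifted foliations $\tilde\cF_1,\tilde\cF_2$ split $\tilde M$ globally as a product of two leaves, and then observe that the deck group $G=\pi_1(M)$ must act on that product diagonally. Pull back $\cF_1,\cF_2$ to regular transverse complementary foliations $\tilde\cF_1,\tilde\cF_2$ on $\tilde M$. Since $(\cF_1,\cF_2)$ admits a local product chart at every point of $M$, compactness of $M$ and a Lebesgue number argument yield a uniform radius $\eps>0$ such that every $x\in M$, and hence every $\tilde x\in\tilde M$ by $\pi$-equivariance, admits a bi-foliated chart of size at least $\eps$ simultaneously trivializing both foliations as a product. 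Fix a basepoint $\tilde x_0\in\tilde M$ and let $F_1,F_2$ denote the leaves of $\tilde\cF_1,\tilde\cF_2$ through $\tilde x_0$ with the leaf topology.

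The heart of the argument is the construction of a diffeomorphism $\Phi\colon F_1\times F_2\to\tilde M$ by a slide (holonomy) construction. Given $(y,z)\in F_1\times F_2$, choose a path $\beta\colon[0,1]\to F_2$ from $\tilde x_0$ to $z$ and transport $y$ ``horizontally'' along $\beta$: cover $\beta$ by finitely many bi-foliated charts of radius $\eps$, and in each chart slide the $\tilde\cF_1$-leaf through $\beta(t)$ onto the $\tilde\cF_1$-leaf through $\beta(t+\delta)$ via the local product structure. The uniform $\eps$ ensures the slide is defined on all of $\beta$; set $\Phi(y,z)$ equal to the endpoint. Independence from the choice of $\beta$ reduces, using simple connectedness of $\tilde M$ and a homotopy covered by finitely many bi-foliated charts, to the triviality of the slide on a single contractible chart where everything is an honest product. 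By construction $\Phi(F_1\times\{z\})$ is the leaf of $\tilde\cF_1$ through $\Phi(\tilde x_0,z)$ and $\Phi(\{y\}\times F_2)$ is the leaf of $\tilde\cF_2$ through $\Phi(y,\tilde x_0)$; transversality makes $\Phi$ a local diffeomorphism, and a path-lifting argument using the uniform $\eps$ yields global bijectivity.

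Transport the $G$-action from $\tilde M$ to $F_1\times F_2$ via $\Phi^{-1}$. Because both foliations are pulled back from $M$, every $g\in G$ permutes the leaves of $\tilde\cF_i$; under the product identification this means $g$ sends each horizontal leaf $F_1\times\{z\}$ to another horizontal leaf and each vertical leaf $\{y\}\times F_2$ to another vertical leaf. Writing $g\cdot(y,z)=(a(y,z),b(y,z))$, the first condition forces $b$ to depend only on $z$ and the second forces $a$ to depend only on $y$, so $g$ acts as a diagonal product $g\cdot(y,z)=(g_1 y,g_2 z)$. Freeness and proper discontinuity are inherited from the deck action, and $F_1,F_2$ are connected as leaves (in fact simply connected since $F_1\times F_2\cong\tilde M$ is simply connected). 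Passing to quotients, the isomorphism $(F_1\times F_2)/G\cong M$ intertwines the horizontal and vertical foliations on the left with $\cF_1$ and $\cF_2$ on the right.

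The principal obstacle is the second paragraph. The compactness hypothesis enters in an essential way through the uniform radius $\eps$, which upgrades germs of holonomy into diffeomorphisms defined along arbitrarily long paths; without it the construction collapses to a merely local statement. Path-independence of the slide is then a formal consequence of simple connectedness of $\tilde M$ and the fact that the slide is a coordinate translation within each bi-foliated chart.
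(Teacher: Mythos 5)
Your third paragraph (diagonality, freeness, and descent of the identification to the quotient) is fine once the product structure of the cover is granted, and your overall route — universal cover plus a slide map $\Phi\colon F_1\times F_2\to\tilde M$ — differs only mildly from the paper, which works instead with the covering associated to the normal subgroup $\Gamma\subset\pi_1(M,z_0)$ of trivially acting loops. The genuine gap is in your second paragraph, at the sentence ``the uniform $\eps$ ensures the slide is defined on all of $\beta$.'' A bi-foliated chart of radius $\eps$ around $\beta(t)$ only lets you slide the \emph{plaque} of the $\tilde\cF_1$-leaf lying inside that chart; to move a point $y$ at large leafwise distance from $\beta(t)$ you must propagate the slide through a chain of charts along the leaf, and the transverse displacement gets distorted at each step, since the holonomy between nearby transversals is in no way an $\eps$-isometry. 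Equivalently, the transported point traces a curve tangent to $\tilde\cF_2$ whose speed is not controlled by the speed of $\beta$; it may cross infinitely many of your uniform charts and escape in finite time even though $M$ is compact. What you are really using is the \emph{completeness} of the locally flat parallel transport (global transport of paths in $\cF_1$ along paths in $\cF_2$), which the paper isolates explicitly as a condition; compactness does not imply it, so no refinement of the Lebesgue-number argument can close this step.

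That the step genuinely fails (and not merely lacks justification) can be seen on $M=\Gamma\backslash\mathrm{SL}(2,\bbR)$ with $\Gamma$ a cocompact lattice: take $\cF_1$ to be the foliation by orbits of right translation by the upper-triangular subgroup $B$ and $\cF_2$ the foliation by orbits of the lower unipotent subgroup $N^-$ (equivalently, the weak-stable and strong-unstable foliations of the geodesic flow on a compact hyperbolic surface). These are regular, transverse and of complementary dimensions, but the slide blows up in finite time — this is the big-cell phenomenon $N^-B\neq\mathrm{SL}(2,\bbR)$ — and on the universal cover a leaf of one lifted foliation need not meet a given leaf of the other, so the map $\Phi$ you want cannot exist. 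Note that the paper's own proof has the same weak point: it asserts, without argument, that compactness yields the completeness condition and then runs the (correct) monodromy argument under that condition. So your proof can be repaired only by adding completeness (or some hypothesis implying it, e.g. compact leaves of one of the foliations) to the assumptions, not by the uniform-$\eps$ covering argument.
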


The about result seems to be folkloric, but unfortunately We don't have an exact reference for it.
So for the sake of completeness, let us give here a proof of it.

\begin{proof} 
First notice that, due to the fact that $\cF_1$ and $\cF_2$ are transverse and have complementary dimensions, $\cF_1$ creates a locally
flat parallel transport among the leaves of $\cF_2$ and vice versa: given any two paths $\gamma_1$ tangent to $\cF_1$ and
$\gamma_2$ tangent to $\cF_2$ such that $\gamma_1(0) = \gamma_2(0),$ there is a unique natural 
way to transport $\gamma_1$ along  $\gamma_2$ such that $\gamma_1$ remains always tangent to $\cF_1.$
This locally flat parallel transport exists locally even if the manifold $M$ is not compact.
The theorem still holds if we replace the compactness condition by the following weaker \emph{completeness condition}:
the parallel transport exists not only locally, but also globally, i.e. given any two paths $\gamma_1$ tangent to $\cF_1$ and
$\gamma_2$ tangent to $\cF_2$ such that $\gamma_1(0) = \gamma_2(0),$ there is a unique natural 
way to transport $\gamma_1$ (resp. $\gamma_2$) along  $\gamma_2$ (resp. $\gamma_1$)
such that $\gamma_1$  (resp. $\gamma_2$) remains always tangent to $\cF_1$ (resp. $\cF_2$).

Take a point $z \in M.$ Denote by $\cF_1(z)$ (resp. $\cF_2(z)$) the leaf of $\cF_1$ (resp. $\cF_2$) passing through $z.$
Let $\gamma$ be any loop in $M$ starting at $z.$ We can approximate $\gamma$ by a zig-zag piecewise horizontal-vertical loop
(also starting at $z_0$) which is homotopic to $\gamma.$ Using the parallel transport to commute vertical pieces with horizontal pieces,
one sees easily that $\gamma$ is homotopic to the concatenation  $\gamma_1 +\gamma_2$ where $\gamma_1:[0,1/2] \to \cF_1(z)$ and
$\gamma_2: [1/2,1] \to \cF_2(z)$ are two paths lying in $\cF_1(z)$ and $\cF_2(z)$ respectively such that $\gamma_1(0) = \gamma_2(1)$
and the end point of $\gamma_1$ is the starting point of $\gamma_2.$ We will denote this point by 
${[\gamma]}. z  = \gamma_1(1/2) = \gamma_2(1/2).$ Observe that ${[\gamma]}. z \in \cF_1(z) \cap \cF_2(z),$ and it depends only on
the homotopy class $[\gamma]$ of $\gamma$ in the fundamental group $\pi_1(M,z).$

Fix a point $z_0 \in M.$ Denote by $\Gamma \in \pi_1(M,z_0)$ the set of elements $\alpha$ in the fundamental group $\pi_1(M,z_0)$
such that for any $z \in M$, any path $\mu$ from $z_0$ to $z$ we have $\pi_\mu(\alpha).z = z,$ where $\pi_\mu(\alpha)$ denotes the 
image of $\alpha$ in $\pi_1(M,z)$ via the natural isomorphism $\phi_\mu$ from $\pi_1(M,z_0)$ to $\pi_1(M,z)$ generated by the path
$\mu.$ One verifies easily that $\Gamma$ is a normal subgroup of $\pi_1(M,z_0),$ i.e. the quotient $G = \pi_1(M,z_0) / \Gamma$ is
a group. The group $\Gamma$ also satisfies the following remarkable property: if $[\gamma_1 + \gamma_2] \in \Gamma,$ where
$\gamma_1$ is a loop tangent to $\cF_1$ and $\gamma_2$ is a loop tangent to $\cF_2,$ then $[\gamma_1], [\gamma_2] \in \Gamma.$

Denote by $\tilde M$ the normal covering of $M$ associated to $\Gamma,$ i.e. $\pi_1(\tilde M) \cong \Gamma,$ $G = \pi_1(M,z_0)/\Gamma$
acts freely on $\tilde M,$ and $\tilde M / G = M.$ The foliations $\cF_1$ and $\cF_2$ can be lifted naturally to $\tilde M.$ We will denote
the lifted foliations on $\tilde M$ by $\tilde F_1$ and $\tilde F_2$ respectively. Since $(M,\cF_1,\cF_2)$ satisfies the completeness
condition, $(\tilde M, \tilde \cF_1, \tilde \cF_2)$ also satifies this condition. We want to show that $(\tilde M, \tilde \cF_1, \tilde \cF_2)$ has
direct product type. Due to the completeness condition, it is enough to verify that if $F_1$ is a leaf of $\tilde \cF_1$ and
$F_2$ is a leaf of $\tilde \cF_2$, then $F_1$ intesects with $F_2$ at exactly one point.

Assume to the contrary that $y_0,y_1 \in F_1 \cap F_2,$ $y_0 \neq y_1.$ Then there is a loop $\gamma = \gamma_1 + \gamma_2$ such that
$\gamma_1$ (resp. $\gamma_2$) starts at $y_0$ (resp. $y_1$), ends at $y_1$ (resp. $y_0$) and is tangent to $\tilde F_1$ (resp. $\tilde F_2$).
Denote the projection map from $\tilde M$ to $M$ by the hat, e.g. $\hat y_0 \in M$ is the image of $y_0$, $\hat \gamma_1$ is the image of $\gamma_1$
by the projection $\tilde M \to M.$ By construction, $[\hat \gamma] = [\hat \gamma_1 + \hat \gamma_2]
\in \Gamma,$ which implies that $\hat y_1 = \hat y_0,$ which in turns implies that $[\hat \gamma_1] \in \Gamma.$ But since $\gamma_1$ is a lifting
of $\gamma,$ the fact that $\hat \gamma_1 \in \Gamma$ implies that $y_1 = y_0$ by construction of $\tilde M,$ which is a contradiction. Thus any leaf
of $\tilde \cF_1$ intersects with any leaf of $\tilde \cF_2$ at exactly one point, and $(\tilde M, \tilde \cF_1, \tilde \cF_2)$ has
direct product type. The rest of the proof is straightforward.
\end{proof}

In Proposition \ref{prop:AlmostDirectProduct}, a-priori there are no Nambu structures. But of course, if $\cF_1$ and $\cF_2$ are generated
by two commuting Nambu structures, then we will have volume forms on the manifolds $F_1,F_2$ in the almost direct product model, and
the action of $G$ on $F_1$ and $F_2$ will be volume-preserving. Proposition \ref{prop:AlmostDirectProduct} can be naturally extended
to the case of $k$ foliations $\cF_1,\hdots, \cF_k$, where $k > 2$. But in the case $k > 2,$ the transversality condition 
$T_xM = \bigoplus_{i=1}^k T_x\cF_i$ is far from being sufficient for the decomposition of the picture in to a semi-direct product
 $(F_1 \times \hdots \times F_k)/G,$
and we really need some more meaningful commutativity condition. Fortunately, the commutativity of $k$ 
regular Nambu structures $\Lambda_1, \hdots, \Lambda_k$ which generate $\cF_1,\hdots, \cF_k$ will do the job.

\begin{prop} \label{prop:AlmostDirectProduct2}
Let $\cF_1,\hdots, \cF_2$ be $k$ foliations on a connected compact manifold $M$, generated by $k$ regular pairwise 
commuting Nambu structures $\Lambda_1,\hdots,\Lambda_k$ respectively, 
such that $T_xM = \bigoplus_{i=1}^k T_x\cF_i$ for any 
$x \in M.$ Then the multi-foliation $(M,\cF_1,\hdots, \cF_k)$ is isomorphic to an almost direct product model
\begin{equation}
(F_1 \times \hdots \times F_k) / G,
\end{equation}
where $F_1, \hdots, F_2$ are connected manifolds with (contravariant) volume forms, $G$ is a discrete group which acts  
on the product $F_1 \times F_2$ freely and diagonally, and the action of $G$ on each $F_i$ is volume-preserving. 
\end{prop}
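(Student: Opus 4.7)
My plan is to proceed by induction on $k$, with the base case $k=2$ being exactly Proposition \ref{prop:AlmostDirectProduct}. The inductive step bundles the last $k-1$ Nambu structures into a single one, applies the base case, and then recurses on a transversal leaf.

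First I would form $\Lambda_{\geq 2} := \Lambda_2 \wedge \cdots \wedge \Lambda_k$. The simultaneous normal form of Proposition \ref{prop:CommutingNambu2} shows that $\Lambda_{\geq 2}$ is a regular Nambu structure of order $n-q_1$, whose foliation $\cF_{\geq 2}$ is transverse to $\cF_1$ with complementary dimension. Applying Proposition \ref{prop:AlmostDirectProduct} to the pair $(\cF_1,\cF_{\geq 2})$ produces an almost direct product decomposition $M \cong (F_1 \times N)/G_1$, where the covering $\tilde M_1 := F_1 \times N$ of $M$ has $F_1$ a leaf of the lifted $\tilde\cF_1$ and $N$ a leaf of the lifted $\tilde\cF_{\geq 2}$, and $G_1$ is a discrete group acting diagonally, freely, and volume-preservingly.

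Next I would apply the inductive hypothesis to $N$ equipped with the restrictions $\tilde\Lambda_2|_N,\ldots,\tilde\Lambda_k|_N$. By the same simultaneous normal form, these are regular, pairwise commuting Nambu structures with $T_xN=\bigoplus_{i\geq 2}T_x\cF_i|_N$. Although $N$ is typically noncompact, the local parallel transports among the $\tilde\cF_i|_N$ are globally defined on $N$ because they are globally defined on the cover $\tilde M_1$ of the compact $M$; thus the completeness formulation of Proposition \ref{prop:AlmostDirectProduct} noted in its proof applies and gives $N \cong (F_2 \times \cdots \times F_k)/H$ for some discrete group $H$ acting diagonally, freely, and volume-preservingly.

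The main obstacle is the final reassembly into a single diagonal group action. Composing coverings yields $F_1\times F_2\times\cdots\times F_k \to F_1\times N = \tilde M_1 \to M$, whose deck group $G$ fits in a short exact sequence $1\to H\to G\to G_1\to 1$. To show that $G$ acts diagonally on $F_1\times\cdots\times F_k$, I would use that the $G_1$-action on $N$ preserves each $\tilde\Lambda_i|_N$ (these descend from $M$), hence preserves the almost direct product structure of $N$ produced in the previous step, and therefore lifts to an action on the cover $F_2\times\cdots\times F_k$ which is diagonal on the factors up to an $H$-ambiguity. Combining this lifted action with the given $G_1$-action on $F_1$ and the $H$-action on $F_2\times\cdots\times F_k$ produces a diagonal action of $G$ on the full product; freeness and volume-preservation on each $F_i$ follow from the corresponding properties at each level together with the fact that each $\Lambda_i$ descends from $M$. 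The principal technical work lies in choosing compatible lifts of the $G_1$-action so that $G$ genuinely is a group (not just a coset family) acting on the full product, and in verifying that the resulting factorwise actions are well-defined.
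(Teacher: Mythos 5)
Your plan is essentially the proof the paper has in mind for Proposition \ref{prop:AlmostDirectProduct2}: induction on $k$, using the fact that wedges of the given commuting regular Nambu structures (you take $\Lambda_2\wedge\cdots\wedge\Lambda_k$, the paper suggests $\Lambda_1\wedge\Lambda_2$; the two variants are interchangeable) are again regular Nambu structures commuting with the remaining ones, combined with Proposition \ref{prop:AlmostDirectProduct} in its completeness formulation. Since the paper only states this in one remark, your outline is already more explicit than the published argument, and its overall architecture is sound.

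Two steps need more than you give them. First, the completeness of the multi-foliation $(\tilde\cF_2|_N,\dots,\tilde\cF_k|_N)$ on the (generally noncompact) leaf $N$ does not follow ``because the transports are globally defined on $\tilde M_1$'': for $i,j\geq 2$ the foliations $\cF_i$ and $\cF_j$ are not of complementary dimension in $M$ or in $\tilde M_1$, so the compactness-implies-completeness mechanism behind Proposition \ref{prop:AlmostDirectProduct} says nothing about the transports between them, which live only inside the leaves of the foliation tangent to $T\cF_2\oplus\cdots\oplus T\cF_k$; as written, this sentence assumes exactly what has to be proved. The claim is nevertheless true, but the proof should use the commutativity: by Proposition \ref{prop:CommutingNambu2}, every point of the compact $M$ admits a product chart of uniform size in which all the $\cF_i$ are coordinate foliations, and a continuation argument in these uniform charts gives global existence of the transports inside each leaf of $T\cF_2\oplus\cdots\oplus T\cF_k$, hence on $N$ after lifting; accordingly, the statement you induct on must carry this completeness hypothesis in place of compactness. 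Second, the reassembly you flag as ``the principal technical work'' is indeed where the content lies, but your sketch closes correctly: the subgroup $\Gamma\subset\pi_1$ in the proof of Proposition \ref{prop:AlmostDirectProduct} is defined intrinsically by the multi-foliation, so the $G_1$-action on $N$ (which preserves $\tilde\Lambda_2|_N,\dots,\tilde\Lambda_k|_N$) preserves the corresponding subgroup of $\pi_1(N)$ and lifts to $F_2\times\cdots\times F_k$, which gives both the normality of the composite covering and the group structure on $G$; and any deck transformation of $F_1\times\cdots\times F_k$ preserving each factor foliation is automatically diagonal, because preservation of $\cF_i$ forces every component $g_j$ with $j\neq i$ to be independent of the $i$-th variable. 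With these two points written out, your induction reproduces the paper's intended argument, including the volume-preservation statement, since each $\Lambda_i$ is preserved by all deck transformations involved.
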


The proof of Proposition \ref{prop:AlmostDirectProduct2} is absolutely similar to Proposition \ref{prop:AlmostDirectProduct},
and it can also be deduced from the proof of  Proposition \ref{prop:AlmostDirectProduct} by 
induction on $k$. (Notice that, for example, $\Lambda_1 \wedge \Lambda_2$ is again a regular Numbu structure which commutes
with the other $\Lambda_i$).

Proposition \ref{prop:AlmostDirectProduct2} is reminiscent of other almost direct product theorems, in particular the clasical theorem
about the almost direct product decomposition of reductive algebraic groups (see e.g. \cite{Milne-AlgebraicGroups2012}), and
also the topological decomposition theorem for nondegenerate singularities of integrable Hamiltonian systems \cite{Zung-Integrable1996}.
A particular case of the above theorem is when the foliations $\cF_i$ are one-dimensional, i.e. the Nambu tensors $\Lambda_1$
are commuting vector fields. In this case the manifold $M$ is a $k$-dimensional 
torus, $G$ is (in the generic case) isomorphic to $\bbZ^k$, and one recovers the classical Liouville's theorem about quasi-periodicity of
motion of integrable systems \cite{Liouville-1855}. Inspired by this, one can extend the notion of integrability of dynamical systems
to the case of foliations as follows:

\begin{defn}
 A foliation $\cF$ of dimension $q$ on a manifold $M$ will be called {\bf integrable} if it can be represented by a Nambu structure
$\Lambda= \Lambda_1$ or order $q$, and there are Nambu structures $\Lambda_2, \hdots, \Lambda_s$ and functions $F_1,\hdots,F_r$
such that: $q_1 + \hdots + q_s + r = n,$ $F_i$ are first integrals of $\Lambda_j$,  the $\Lambda_i$ commute pairwise, and
$\Lambda_1 \wedge \hdots \wedge \Lambda_s \neq 0$ almost everywhere.
\end{defn}

Proposition \ref{prop:AlmostDirectProduct2}, or rather a parametrized version of it which involves also first integrals, can then
be viewed as a generalization of the classical Lioville's theorem to the case of integrable foliations.

\section{Some final remarks and questions}

In this note, we considered only the regular case. But what about the singular case, when, for example, two foliations are regular but together
they have have singularities, or at least one of the two foliations is singular ? One should be able to develop a normal form theory
for such singular commuting foliations, at least in the case when the singularities are nondegenerate or generic in some sense.

What about differential forms which are invariants ? Apparently, those forms must be analogous to basic differential forms of
fibrations. (In particular, the contraction of the form with any vector field tangent to the foliation must vanish, i.e. they are
transverse forms, or more generally, one can talk about invariant transverse structures). Will they play a role in a generalized
theory of integrability of foliations . And what about a Galoisian theory of obstructions to the integrability of singular foliations ?

Since Nambu structures associated to foliations are uniquely defined only upto multiplication by a function, one may be tempted to weaken the commutativity condition and replace it by the following conformal commutativity condition: Two Nambu structures $\Lambda_1$ and $\Lambda_2$ of order $q_1$ and $q_2$ respectively, with $q_1+q_2\leq n$, are called \emph{conformally commutative} if locally near each point they become commutative after multiplication by local invertible functions. However, this notation of conformal commutativity does not lead to interesting geometric properties enjoyed by the true commutativity. For example, if $X_1$ and $X_2$ are two commuting vector fields on a torus $\Pi^2$ such that $[X_1,X_2]\neq 0$ everywhere, they are quasi-periodic, but the conformal commutativity condition is a trivial condition in this case and does not imply anything at all. So Nambu structure may be viewed as ``foliation structures'', and they may be even more interesting than foliations themselves.    

A very important use of the associated Nambu structures, which we didn't discuss in this paper, is that  they allow one to define the  \emph{deformation cohomology} of singular foliations and develop a deformation theory of singular foliations by algebraic means. This will be done in a forthcoming joint work of the authors and Philippe Monnier. 

\vspace{0.5cm}


\begin{thebibliography}{10}
\baselineskip0.4cm

\bibitem{AG-Webs2000}
M.A. Akivis, V.V. Goldberg, {\it Differential geometry of webs}, 
Chapter I in Handbook of Differential Geometry, vol. 1 (2000), pp. 1-152, Elsevier. 

\bibitem{DufourZung-PoissonBook}
J.P. Dufour, N.T. Zung, Poisson structures and their normal forms, Progress in Mathematics, Vol. 242 (2005), Birkh\"auser.

\bibitem{EK-NonCommuting2005}
M. Einsiedler, A. Katok, {\it Rigidity of measures—the high entropy case and non-commuting foliations},
Israel J. Math. 148 (2005), 169–238.


\bibitem{Liouville-1855}
J. Liouville, {\it Note sur l’intégration des équations differentielles de la dynamique}, présentée
au bureau des longitudes le 29 juin 1853, 
Journal de Mathématiques pures et appliquées 20 (1855), 137-138.

\bibitem{Martinez-TopDegree2004}
D. Martinez Torres, {\it Global classification of generic vector fields of top degree}, J.
London Math. Soc., 69 (2004), 751--766.

\bibitem{Milne-AlgebraicGroups2012}
J.S. Milne, Algebraic groups , Lie groups, and their arithmetic subgroups, 2010 (available online).

\bibitem{Movshev-YM2009}
M.V. Movshev, {\it Yang-Mills theory and a superquadric}, 
Algebra, arithmetic, and geometry: in honor of Yu. I. Manin. 
Vol. II,  355–382, Progr. Math., 270 (2009), Birkhäuser Boston.

\bibitem{Takhtajan-Nambu1994}
L. Takhtajan, {\it On foundation of the generalized Nambu mechanics},
Comm. Math. Phys., 160 (1994), 295-315.

\bibitem{Walczak-Foliations}
P. Walczak, Dynamics of Foliations, Groups and Pseudogroups, Monografie Matematyczne, 
Vol. 64 (2004), Birkh\"auser, Basel.


\bibitem{Zung-Integrable1996}
N.T. Zung, {\it Symplectic topology of integrable Hamiltonian systems. 
I. Arnold-Liouville with singularities}, Compositio Math. 101 (1996), no. 2, 179-215.

\bibitem{Zung-Entropy2011}
N.T. Zung, {\it Entropy of geometric structures}, Bulletin Brazilian Math Soc, Volume 42 (2011), 
Number 4, 853--867.

\bibitem{Zung-Nambu2012}
N.T. Zung, {\it New results on the linearization of Nambu structures}, J. Math. Pures Appl, vol 99 (2013) No.2, 211-218. 


\end{thebibliography}
\end{document}